\newcommand{\bcen}{\begin{center}}     \newcommand{\ecen}{\end{center}}
\newcommand{\bay}{\begin{array}}      \newcommand{\eay}{\end{array}}
\newcommand{\beq}{\begin{eqnarray*}}      \newcommand{\eeq}{\end{eqnarray*}}
\def\gl{\mathrm{gl.dim}}
\def\rad{\mathrm{rad}}
\def\sup{\mathrm{sup}}
\def\Hom{\mathrm{Hom}}
\def\Ker{\mathrm{Ker}}
\def\Im{\mathrm{Im}}
\def\Ext{\mathrm{Ext}}
\def\Tor{\mathrm{Tor}}
\def\mod{\mathrm{mod}}
\def\Mod{\mathrm{Mod}}
\def\id{\mathrm{id}}
\def\pd{\mathrm{pd}}
\def\per{\mathrm{per}}
\def\proj{\mathrm{proj}}
\def\Gproj{\mathrm{Gproj}}
\def\inj{\mathrm{inj}}
\def\Cone{\mathrm{Cone}}
\def\thick{\mathrm{thick}}
\begin{document}

\newtheorem{theorem}{Theorem}[section]
\newtheorem{proposition}[theorem]{Proposition}
\newtheorem{lemma}[theorem]{Lemma}
\newtheorem{corollary}[theorem]{Corollary}
\newtheorem{remark}[theorem]{Remark}
\newtheorem{example}[theorem]{Example}
\newtheorem{definition}[theorem]{Definition}
\newtheorem{question}[theorem]{Question}
\numberwithin{equation}{section}

\title{\large\bf
Eventually homological isomorphisms and Gorenstein projective
modules}

\author{\large Yongyun Qin}

\date{\footnotesize College of Mathematics and Statistics,
Qujing Normal University, \\ Qujing, Yunnan 655011, China. E-mail:
qinyongyun2006@126.com}

\maketitle

\begin{abstract} We prove that a certain eventually homological isomorphism
between module categories induces a triangle equivalence between their
singularity categories, Gorenstein defect categories and the stable categories of Gorenstein
projective modules. Further, we show that Auslander-Reiten conjecture
and Gorenstein symmetry conjecture
can be reduced by eventually homological isomorphisms.
Applying the results to arrow removal and vertex removal, we describe the Gorenstein projective
modules over some non-monomial algebras, and we verify the Auslander-Reiten conjecture for certain
algebras.
\end{abstract}

\medskip

{\footnotesize {\bf Mathematics Subject Classification (2020)}:
16E35; 16E65; 16G50; 18G25; 18G80.}

\medskip

{\footnotesize {\bf Keywords}: Singularity categories;
Gorenstein defect categories; Gorenstein projective
modules; Auslander-Reiten conjecture; Gorenstein symmetry conjecture;
Eventually homological isomorphisms. }

\bigskip

\section{\large Introduction}

\indent\indent
A functor $F:\mathcal{B}\rightarrow \mathcal{C}$ between abelian categories
is called an {\it eventually homological isomorphism}
if there is an integer $t$ such that
for every $j > t$, there is an
isomorphism $\Ext ^j _{\mathcal{B}}(X, Y ) \cong \Ext ^j _{\mathcal{C}}(FX, FY )$
for all objects $X, Y\in \mathcal{B}$.
This notion was introduced in \cite{PSS14}, and it arises naturally in
reducing homological properties of finite dimensional algebras.
Indeed, Psaroudakis et al. \cite{PSS14} characterized when the
functor $eA\otimes _A-: \mod A\rightarrow
\mod eAe$ is an eventually homological isomorphism,
and in that case, they transferred the Gorensteinness, singularity categories
and Fg condition of $A$ to $eAe$,
where $A$ is an algebra and $e$ is an idempotent of $A$.
Recently, Erdmann et al. showed that the arrow removal operation, passing from
a bound quiver algebra $A=kQ/I$ to $A/\langle \overline{\alpha} \rangle$,
yields an eventually homological isomorphism
if $\alpha$ is an arrow which does not occur in a minimal generating set
of $I$ \cite{EPS21}. We refer to \cite{GPS18, Qin20, WW20} for more discussions on eventually homological isomorphisms.

Recall that the {\it singularity category} $D_{sg}(A)$ of an algebra
$A$ is the Verdier quotient of the bounded derived category of finitely generated modules over $A$ by the
full subcategory of perfect complexes \cite{Buch87}.
According to \cite{Buch87}, there is an embedding functor
$F$ from the stable category $\underline{\Gproj} A$ of finitely generated Gorenstein projective
modules to $D_{sg}(A)$, and the {\it Gorenstein
defect category} of $A$ is defined to be Verdier quotient
$D_{def}(A):=D_{sg}(A)/ \Im F$, see \cite{BJO15}. This category
measures how far the algebra $A$ is from
being Gorenstein, because
$A$ is Gorenstein if and only if $D_{def}(A)$ is trivial \cite{BJO15}.
However, for non-Gorenstein algebras, not much is known
about their
Gorenstein defect categories. In recent years, many people described the Gorenstein projective
modules over some special kinds of algebras, such as Nakayama algebras \cite{Rin13} and monomial algebras \cite{CSZ18},
and some experts compared the Gorenstein defect categories
between two algebras related to one another \cite{CR20, LHZ22, Lu17, Lu19}.

In \cite{PSS14}, the authors proved that the Gorensteinness
of algebras is invariant under
certain eventually homological isomorphisms.
Inspired by this, we consider the following natural question:
is the Gorenstein
defect categories preserved under
eventually homological isomorphisms? and how about the homological conjectures
related to the Gorensteinness? We answer
these questions by the following theorem, which
is listed as Theorem~\ref{theorem-GSC}, Theorem~\ref{theorem-sing-equiv},
Theorem~\ref{theorem-Gproj-equiv}
and Theorem~\ref{theorem-ARC} in this paper.

\medskip

{\bf Theorem I.}  {\it Let $A$ and $B$ be two finite dimensional algebras,
and $F:\mod A\rightarrow \mod B$
be an eventually homological isomorphism
which is essentially surjective. Assume that $F$ admits a left adjoint
and a right adjoint. Then $F$
induces the following triangle equivalences
$$ D_{sg}(A)\cong D_{sg}(B), \ \underline{\Gproj }A\cong \underline{\Gproj }B
\ \mbox{and} \ D_{def}(A)\cong D_{def}(B),$$
and $A$ satisfies the Gorenstein
symmetry conjecture (resp. Auslander-Reiten conjecture,
Gorenstein projective conjecture) if and only if so does $B$.}

\medskip

Theorem I can be applied to arrow removal and vertex removal to reduce
the Gorenstein homological properties of algebras.

\medskip

{\bf Corollary I.} (Corollary~\ref{cor-arrow})
{\it Let $A = kQ/I$ be a quotient of a path algebra $kQ$
over a field $k$. Choose an arrow $\alpha$ in $Q$ such that $\alpha$ does not occur in a minimal generating set
of $I$ and define $B =A/\langle \overline{\alpha} \rangle$.
Then
$ \underline{\Gproj }A\cong \underline{\Gproj }B$ and
$D_{def}(A)\cong D_{def}(B).$ Moreover, $A$ satisfies the Gorenstein
symmetry conjecture (resp. Auslander-Reiten conjecture,
Gorenstein projective conjecture) if and only if so does $B$.
}

We mention that the above arrow removal operation was
used to reduce the Gorensteinness, singularity categories
and the Fg condition in \cite{EPS21}, and was investigated
with respect to the Hochschild (co)homology
and the finitistic dimension conjecture \cite{CLMS20, GPS18}.

Combining Theorem I with the result of Psaroudakis et al.,
we get the following corollary, which transfers the Gorenstein homological properties of $A$ to $eAe$.
This transition is called vertex removal in
\cite{FS92, GPS18}.
\medskip

{\bf Corollary II.} (Corollary~\ref{cor-idem})
{\it Let A be an algebra and $e$ be an idempotent in $A$. Assume that
$\pd _{eAe}eA< \infty $ and $\id _A(\frac {A/AeA}{\rad (A/AeA)})<\infty$
(or equivalently, $\pd _{(eAe)^{op}}Ae< \infty $ and $\pd _A(\frac {A/AeA}{\rad (A/AeA)})<\infty$).
Then the functor $eA\otimes _A-: \mod A\rightarrow
\mod eAe$ induces triangle equivalences $\underline{\Gproj }A\cong \underline{\Gproj }(eAe)$ and
$D_{def}(A)\cong D_{def}(eAe).$ Moreover, $A$ satisfies the Gorenstein
symmetry conjecture (resp. Auslander-Reiten conjecture,
Gorenstein projective conjecture) if and only if so does $eAe$ .
}

\medskip

We mention that the functor $eA\otimes _A-$
also induces a singular equivalence under the condition of Corollary II, see \cite{Chen09, PSS14}.
Now Corollary II can be compared with a recent result of Li et al. \cite{LHZ22}.
Assume that $\Tor ^{eAe}_i (Ae,G) = 0$ for any
$G \in \Gproj (eAe)$ and $i$ sufficiently large.
Then the functor $eA\otimes _A-$
induces triangle equivalences $D_{sg}(A)\cong D_{sg}(eAe)$, $D_{def}(A)\cong D_{def}(eAe)$
and $\underline{\Gproj }A\cong \underline{\Gproj }(eAe)$
if and only if $\pd _A(\frac {A/AeA}{\rad (A/AeA)})<\infty$, $\pd _{eAe}eA< \infty $
and both $eA\otimes _A-$ and $Ae\otimes _{eAe}-$ preserve modules of
finite Gorenstein projective dimension, see \cite[Corollary 1.2]{LHZ22}.
Here, we give a sufficient condition which seems more acceptable
to achieve these equivalences. Indeed, according to our proof,
the assumptions $\pd _{eAe}eA< \infty $ and $\id _A(\frac {A/AeA}{\rad (A/AeA)})<\infty$
infer that both $eA\otimes _A-$ and $Ae\otimes _{eAe}-$ preserve modules of
finite Gorenstein projective dimension.

As an application of
Corollary II, we reduce the Gorenstein homological
properties of a triangular matrix algebra to its corner algebras
under certain conditions, see Corollary~\ref{cor-tri-alg-A}
and Corollary~\ref{cor-tri-alg-B}. This reduction was also investigated
in \cite{LHZ22} with some different assumptions.
We give some concrete examples illustrating that
our results can be applied to study the Gorenstein projective
modules and the Auslander-Reiten conjecture
for some non-monomial algebras.

The paper is organized as follows. In section 2, we recall some relevant
definitions and conventions.
In section 3 we prove Theorem I.
In section 4, Corollary I
and Corollary II are proved, and the applications
on triangular matrix algebras and concrete examples are given.

 \section{\large Definitions and conventions}\label{Section-definitions and conventions}

\indent\indent In this section we will fix our notation and recall some basic definitions.

Let $\mathcal{S}$ be a set of objects of
a triangulated category $\mathcal{T}$. We denote by $\thick \mathcal{S}$
the smallest triangulated subcategory of $\mathcal{T}$
containing $\mathcal{S}$ and closed under taking direct summands.

Throughout $k$ is a fixed field and $D:=\Hom _k(-,k)$.
All algebras are assumed to be finite dimensional
associative $k$-algebras with identity
unless stated otherwise.
Let $A$ be such an algebra. We denote by $\Mod A$ the
category of left $A$-modules, and we view right $A$-modules
as left $A^{op}$-modules, where $A^{op}$ is the opposite algebra of $A$.
Denote by $\mod A$, $\proj A$
and $\inj A$ the full subcategories of $\Mod A$ consisting of all finitely
generated modules, finitely
generated projective modules and finitely
generated injective modules, respectively.
Let
$K^b (\proj A)$ (resp. $K^b (\inj A)$) be the bounded homotopy category of complexes
over $\proj A$ (resp. $\inj A$).
Let $\mathcal{D}(\Mod A)$ (resp. $\mathcal{D}^b(\mod A)$)
be the derived category (resp. bounded derived category) of complexes over $\Mod A$ (resp. $\mod A$).
Usually, we
just write $\mathcal{D} A$ (resp. $\mathcal{D}^b(A)$) instead of $ \mathcal{D}(\Mod A)$ (resp. $ \mathcal{D}^b(\mod A)$).

Up to isomorphism, the objects in $K^{b}(\proj A)$ are
precisely all the compact objects in $\mathcal{D} A$. For
convenience, we do not distinguish $K^{b}(\proj A)$ from the {\it
perfect derived category} $\mathcal{D}_{\per}(A)$ of $A$, i.e., the
full triangulated subcategory of $\mathcal{D} A$ consisting of all
compact objects, which will not cause any confusion. Moreover, we
also do not distinguish $K^b(\inj A)$ (resp.
$\mathcal{D}^b(A)$) from its essential image under the
canonical embedding into $\mathcal{D} A$.

Recall that an algebra $A$ is said to be {\it Gorenstein} if
$\id _AA <\infty$ and $\id _{A^{op}}A<\infty$.
The {\bf Gorenstein symmetry conjecture}
states that $\id _AA <\infty$ if and only if $\id _{A^{op}}A<\infty$.
This conjecture is listed in Auslander-Reiten-Smal{\o}'s book
\cite[p.410, Conjecture (13)]{ARS95}, and it closely connects with
other homological conjectures. For example, it is known that the
finitistic dimension conjecture implies the Gorenstein symmetry
conjecture. But so far all these conjectures are still open.

Following \cite{Buch87, Orl04}, the {\it singularity category} of $A$ is the
Verdier quotient $D_{sg}(A) = \mathcal{D}^b(A)/K^{b}(\proj A)$.
Recall that an $A$-module $M$ is called
{\it Gorenstein projective} if there is an
exact sequence $$\xymatrix{P^\bullet = \cdots \ar[r]& P^{-1}
\ar[r]^{d^{-1}} & P^0 \ar[r]^{d^{0}} & P^1
\ar[r] & \cdots} $$
of $\proj A$ with $M= \Ker d^0$ such that $\Hom _A(P^\bullet, Q)$ is exact for
every $Q \in \proj A$. Denote by $\Gproj A$
the subcategory of $\mod A$ consisting of Gorenstein projective modules.
It is well known that $\Gproj A$
is a Frobenius category, and hence its stable category $\underline{\Gproj} A$ is a triangulated category.
Moreover, there is a canonical triangle functor $F: \underline{\Gproj} A \rightarrow D_{sg}(A)$
sending a Gorenstein projective object to the corresponding stalk complex concentrated in degree zero.

\begin{definition}{\rm
The Verdier quotient $D_{def}(A):=D_{sg}(A)/ \Im F$
is called the Gorenstein defect
category of $A$.}
\end{definition}

For an algebra $A$, we define
${}^{\perp}A:=\{X\in\mod A|\Ext_A^i(X, A)=0\mbox{ for all }i>0\}$,
and denote $\underline{^\bot A}$ the stable category of $^\bot A$
modulo finitely generated projective $A$-modules. According to \cite[Theorem 2.12]{BM94}, $\underline{^\bot A}$
is a left triangulated
category with the standard left triangulated structure, and
it is clear that there is a embedding functor
$\underline{^\bot A} \hookrightarrow D_{sg}(A)$ of left triangulated
categories.

In an attempt to prove Nakayama
conjecture,  Auslander and Reiten \cite{AR75}
proposed the following conjecture: a finitely generated
module $M$ is projective if
$\Ext ^i _A(M, M\oplus A) = 0$, for any $i \geq 1$.
This conjecture is called {\bf Auslander-Reiten conjecture},
and it is true for
several classes of algebras, such as algebras of finite representation type,
syzygy-finite algebras, symmetric biserial algebras,
algebras with radical square zero and local algebras with radical cube zero \cite{AR75, Xu2013a, Xu2015}.

As a special case of Auslander-Reiten conjecture, Luo and Huang \cite{LH08}
proposed the {\bf Gorenstein projective conjecture}: a finitely generated Gorenstein projective
module $M$ is projective if
 $\Ext ^i _A(M,M)
= 0$, for any $i \geq 1$.
The Auslander-Reiten conjecture
and the Gorenstein projective conjecture coincide when $A$ is a
Gorenstein algebra, but it seems not true in general.
Moreover, the
Gorenstein projective conjecture is proved for
CM-finite algebras \cite{Zhang12}. For more development of this conjecture we refer to
\cite{LJ16}.

Let $A$ and $B$ be two
algebras
and $F : \mathcal{D} A\rightarrow \mathcal{D} B$ be a triangle functor. We
say that $F$ {\it restricts} to $K^b (\proj )$ (resp.
$\mathcal{D}^b(\mod)$, $K^b(\inj )$) if $F$ sends $K^b (\proj A)$ (resp.
$\mathcal{D}^b(\mod A)$, $K^b(\inj A)$) to $K^b (\proj B)$ (resp.
$\mathcal{D}^b(\mod B)$, $K^b(\inj B)$).

\section{Eventually homological isomorphisms and Gorenstein  projective modules}
\indent\indent In this section, we will compare the singularity categories,
Gorenstein defect categories and the stable categories of Gorenstein projective modules
between two algebras linked by an eventually homological isomorphism. Moreover, we
consider the Auslander-Reiten conjecture
and the Gorenstein symmetry conjecture for these algebras.

Recall that
a functor $F:\mod A\rightarrow \mod B$
is called an {\it eventually homological isomorphism}
if there is an integer $t$ such that
for every $j > t$, there is an
isomorphism $\Ext ^j _{\mathcal{B}}(X, Y ) \cong \Ext ^j _{\mathcal{C}}(FX, FY )$
for all objects $X, Y\in \mathcal{B}$.
Given the mallest such $t$, we call the functor a $t$-{\it eventually homological isomorphism}.
The following two theorems from \cite{PSS14} will be used frequently.

\begin{theorem}\label{theorem-ehi}{\rm (\cite[Theorem 4.3 (ii)]{PSS14})}
Let $F:\mod A\rightarrow \mod B$ be a $t$-eventually homological isomorphism
which is essentially surjective.

{\rm (i)} For every $X\in \mod A$, we have $\pd _BF(X)\leq \sup \{t, \pd _AX  \}$
and $\id _BF(X)\leq \sup \{t, \id _AX  \}$;

{\rm (ii)} For any $Y\in \mod B$, assume $F(Y')\cong Y$
for some $Y'\in \mod A$. Then we have $\pd _AY'\leq \sup \{t, \pd _BY \}$
and $\id _AY'\leq \sup \{t, \id _BY \}$.
\end{theorem}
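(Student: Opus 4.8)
The plan is to reduce the statement to two elementary homological criteria. For a finite dimensional algebra $\Lambda$ and $M\in\mod\Lambda$, one has $\pd_\Lambda M\le n$ if and only if $\Ext^{n+1}_\Lambda(M,N)=0$ for every $N\in\mod\Lambda$; this is because $\Ext^{n+1}_\Lambda(M,-)\cong\Ext^1_\Lambda(\Omega^nM,-)$ and the vanishing of $\Ext^1_\Lambda(L,-)$ on all of $\mod\Lambda$ forces a finitely generated module $L$ to be projective. Dually, $\id_\Lambda M\le n$ if and only if $\Ext^{n+1}_\Lambda(N,M)=0$ for every $N\in\mod\Lambda$. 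With these criteria in hand, the theorem follows by feeding in the two hypotheses: the Ext-isomorphism in degrees $>t$ and essential surjectivity.

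For (i), put $n:=\sup\{t,\pd_AX\}$. Given an arbitrary $Z\in\mod B$, essential surjectivity provides $X'\in\mod A$ with $F(X')\cong Z$. For each $j>n$ we have $j>t$, so the eventually homological isomorphism gives $\Ext^j_B(F(X),Z)\cong\Ext^j_B(F(X),F(X'))\cong\Ext^j_A(X,X')$, and the last group vanishes because $j>\pd_AX$. Since $Z$ is arbitrary, the first criterion yields $\pd_BF(X)\le n$. The bound $\id_BF(X)\le\sup\{t,\id_AX\}$ is obtained in the same way, now placing the test module on the left: for $j>\sup\{t,\id_AX\}$ and $Z\cong F(X')$ one gets $\Ext^j_B(Z,F(X))\cong\Ext^j_A(X',X)=0$. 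For (ii), put $m:=\sup\{t,\pd_BY\}$ and fix $N\in\mod A$; for $j>m$ we get $\Ext^j_A(Y',N)\cong\Ext^j_B(F(Y'),F(N))=\Ext^j_B(Y,F(N))=0$ since $j>\pd_BY$, so $\pd_AY'\le m$, and the injective-dimension assertion is the mirror argument using $\Ext^j_A(N,Y')\cong\Ext^j_B(F(N),Y)$ and $\id_BY$.

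I do not expect a genuine obstacle; the proof is essentially bookkeeping. The two points that warrant a little care are: (a) essential surjectivity must be invoked precisely so that every finitely generated $B$-module serving as a test object in the criteria is isomorphic to some $F(X')$ --- without it the bounds for $\pd_BF(X)$ and $\id_BF(X)$ would fail; and (b) one must check that forming $\sup\{t,-\}$ really does push every relevant degree $j$ strictly above $t$, which is what licenses the use of the Ext-isomorphism. It is also worth noting that only the existence of an isomorphism $\Ext^j_A(X,Y)\cong\Ext^j_B(FX,FY)$ for each pair of objects and each $j>t$ is needed; no naturality in $X$ or $Y$, and no compatibility with connecting maps, enters the argument.
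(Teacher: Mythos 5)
The paper does not prove this theorem: it is imported verbatim from \cite[Theorem 4.3 (ii)]{PSS14}, so there is no in-paper proof to compare against. Judged on its own merits, your argument is correct and is the standard one. The reduction to the Ext-vanishing criterion is sound: for $M\in\mod\Lambda$, the equivalence $\pd_\Lambda M\le n \Leftrightarrow \Ext^{n+1}_\Lambda(M,N)=0$ for all $N\in\mod\Lambda$ does hold because $\Ext^{n+1}_\Lambda(M,-)\cong\Ext^{1}_\Lambda(\Omega^n M,-)$ and a finitely generated module $L$ with $\Ext^1_\Lambda(L,K)=0$, where $0\to K\to P\to L\to 0$ is a projective presentation, must be a summand of $P$. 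The deployment of the two hypotheses is also placed exactly where it is needed: essential surjectivity is used (and only used) in part (i) so that arbitrary test objects $Z\in\mod B$ lift to $\mod A$, and the choice $n=\sup\{t,\pd_A X\}$ (resp.\ $m=\sup\{t,\pd_B Y\}$) guarantees that every degree tested lies strictly above $t$, which is precisely what licenses the Ext-isomorphism. Your closing remark that only the existence of the isomorphisms, not naturality, is used is accurate and worth keeping in mind, since it is exactly the weak form of the definition the paper records.
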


\begin{theorem}\label{theorem-Gor}{\rm (\cite[Theorem 4.3 (v)]{PSS14})}
Let $F:\mod A\rightarrow \mod B$ be an eventually homological isomorphism
which is essentially surjective.
Then $A$ is
Gorenstein if and only if so is $B$.
\end{theorem}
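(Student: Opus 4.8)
The plan is to replace the property ``Gorenstein'' by an equivalent one stated purely in terms of finiteness of projective and injective dimensions of arbitrary finitely generated modules, because such a property is visibly compatible with the bounds in Theorem~\ref{theorem-ehi} and with essential surjectivity. The reformulation I would use is: \emph{an algebra $A$ is Gorenstein if and only if, for every $M\in\mod A$, one has $\pd_A M<\infty$ precisely when $\id_A M<\infty$.} For the ``only if'' direction, if $A$ is Gorenstein with $n=\id_A A=\id_{A^{\op}}A$, then a module $M$ of finite projective dimension has a finite projective resolution whose terms $P_i$ satisfy $\id_A P_i\le\id_A A=n$, so iterated dimension shifting gives $\id_A M\le n$; applying the same over $A^{\op}$ and dualizing yields the reverse implication $\id_A M<\infty\Rightarrow\pd_A M<\infty$. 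For the ``if'' direction, apply the hypothesis to $M=A$ (the left regular module) to get $\id_A A<\infty$, and to the injective cogenerator $M=D(A)$ of $\mod A$ to get $\pd_A D(A)<\infty$, i.e.\ $\id_{A^{\op}}A<\infty$; hence $A$ is Gorenstein.

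Granting this, the theorem follows quickly. Let $F\colon\mod A\to\mod B$ be a $t$-eventually homological isomorphism which is essentially surjective. For $M\in\mod A$, Theorem~\ref{theorem-ehi}(i) gives $\pd_B F(M)\le\sup\{t,\pd_A M\}$ and $\id_B F(M)\le\sup\{t,\id_A M\}$, while Theorem~\ref{theorem-ehi}(ii), applied with $Y=F(M)$ and $Y'=M$, gives the reverse estimates $\pd_A M\le\sup\{t,\pd_B F(M)\}$ and $\id_A M\le\sup\{t,\id_B F(M)\}$; consequently $\pd_A M<\infty\iff\pd_B F(M)<\infty$ and $\id_A M<\infty\iff\id_B F(M)<\infty$. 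Now suppose $A$ is Gorenstein, take $N\in\mod B$, and use essential surjectivity to write $N\cong F(M)$. If $\pd_B N<\infty$ then $\pd_A M<\infty$, so $\id_A M<\infty$ by the reformulation, so $\id_B N<\infty$; the reverse implication is symmetric, so $B$ satisfies the projective/injective-dimension criterion and is therefore Gorenstein. Interchanging the roles of $A$ and $B$ and running the same argument (every $M\in\mod A$ already has the form that feeds both parts of Theorem~\ref{theorem-ehi}) yields the converse.

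The argument is short once the reformulation is in hand, so there is no serious technical obstacle; the point to be careful about is that $F$ is not assumed to be exact, so one cannot transport projective or injective resolutions, syzygies, or Gorenstein-projective-dimension data along $F$, and every transfer of homological information must be routed through the numerical inequalities of Theorem~\ref{theorem-ehi}. This is precisely why a characterization of Gorensteinness in terms of finiteness of the two classical dimensions of individual modules — rather than one phrased via $\Gproj A$, complete resolutions, or the singularity category — is the natural tool to invoke here.
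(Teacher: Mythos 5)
The paper does not give its own proof of this statement: it is quoted verbatim from \cite[Theorem 4.3 (v)]{PSS14} with only a citation, so there is no in-paper argument to compare against. Your proposal is correct and self-contained. The reformulation you invoke — that $A$ is Iwanaga--Gorenstein if and only if the classes $\{M\in\mod A:\pd_A M<\infty\}$ and $\{M\in\mod A:\id_A M<\infty\}$ coincide — is a standard characterization, and your verification of it (dimension shifting along finite projective resolutions for one implication, and duality $D$ with the regular bimodule and $D(A)$ as test objects for the other) is sound. The transfer step is exactly what one would expect: the two inequalities from Theorem~\ref{theorem-ehi}, combined with essential surjectivity, give $\pd_A M<\infty\iff\pd_B F(M)<\infty$ and $\id_A M<\infty\iff\id_B F(M)<\infty$, after which the Gorenstein property passes back and forth via the reformulated criterion. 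This is presumably close to how \cite{PSS14} derive their part~(v) from their part~(ii), so your argument is not only correct but very likely the intended one; in any case it fills a gap the present paper leaves to the reference. One small caution you anticipated correctly: since $F$ is not assumed exact, one must route everything through the numerical bounds rather than trying to transport resolutions, and your proof respects that constraint throughout.
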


Now we will reduce Gorenstein symmetry
conjecture by eventually homological isomorphisms.
\begin{theorem}\label{theorem-GSC}
Let $F:\mod A\rightarrow \mod B$ be an eventually homological isomorphism
which is essentially surjective.
Then $A$ satisfies the Gorenstein symmetry
conjecture if and only
if so does $B$.
\end{theorem}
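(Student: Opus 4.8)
The plan is to use the key inequalities in Theorem~\ref{theorem-ehi} to transport finiteness of injective dimension of the regular bimodule across $F$. The Gorenstein symmetry conjecture for $A$ asserts that $\id_A A < \infty$ if and only if $\id_{A^{\op}} A < \infty$; we must show this biconditional holds for $A$ precisely when it holds for $B$. The essential point is that $F$ being an eventually homological isomorphism controls the injective dimension of $A$ (as a left module) versus that of $B$, and that a \emph{symmetric} hypothesis on the other side allows us to control the $\op$-side as well.

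First I would observe that $F$ essentially surjective means every $B$-module is (isomorphic to) $F$ of some $A$-module; in particular $B \cong F(A')$ for some $A' \in \mod A$. By Theorem~\ref{theorem-ehi}(i) applied to $X = A$, we get $\id_B F(A) \leq \sup\{t, \id_A A\}$, so $\id_A A < \infty$ implies $\id_B F(A) < \infty$. This is not immediately $\id_B B < \infty$, so the cleaner route is: the conjecture is really a statement about whether $A$ is Gorenstein being detected one-sidedly. Here is where I would lean on Theorem~\ref{theorem-Gor}: $A$ is Gorenstein iff $B$ is Gorenstein. So if $A$ satisfies Gorenstein symmetry and $\id_B B < \infty$, I want to conclude $\id_{B^{\op}} B < \infty$.

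The natural argument: suppose $A$ satisfies the conjecture and suppose $\id_B B < \infty$; I'd like $\id_{B^{\op}}B<\infty$. Since $F$ is essentially surjective, write $B = F(A')$. By Theorem~\ref{theorem-ehi}(ii), $\id_A A' \leq \sup\{t, \id_B B\} < \infty$. Hmm — but I need information about $\id_A A$ itself, and $A'$ need not be $A$. The resolution is to combine with Gorensteinness: if $\id_B B < \infty$ then one needs $\id_{B^{\op}} B < \infty$, and it suffices to show $B$ is Gorenstein. By Theorem~\ref{theorem-Gor}, $B$ is Gorenstein iff $A$ is. So the whole thing reduces to: \emph{$\id_B B < \infty$ implies $A$ is Gorenstein}, under the hypothesis that $A$ satisfies Gorenstein symmetry. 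Given $\id_B B<\infty$, Theorem~\ref{theorem-ehi}(ii) gives $\id_A A'<\infty$ for the $A'$ with $F(A')\cong B$, but to get $\id_A A<\infty$ I should instead argue that $\id_B B < \infty$ forces $\id_{B}(\text{all of }\mod B)$ bounded appropriately, or directly that $\gl$-type finiteness of $\id_B$ on projectives transfers. Actually the clean statement is that $F$ being an eventually homological isomorphism implies $\sup\{\id_B N : N\in\mod B\}<\infty \iff \sup\{\id_A M : M\in\mod A\}<\infty$, hence $\id_B B<\infty$ iff $\id_A A<\infty$ \emph{is false in general} — so the correct final move must be: $\id_B B < \infty$ $\Rightarrow$ (by Thm~\ref{theorem-ehi}(ii) and a syzygy/cosyzygy argument, since every $B$-injective-coresolution pulls back) $\id_A A < \infty$; then Gorenstein symmetry for $A$ gives $\id_{A^{\op}}A<\infty$; then $A$ is Gorenstein; then by Theorem~\ref{theorem-Gor} $B$ is Gorenstein; hence $\id_{B^{\op}}B<\infty$. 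The reverse implication for $B$ is symmetric, and then swapping the roles of $A$ and $B$ (using that $F$ essentially surjective plus adjoints, or just the symmetry of "eventually homological isomorphism" up to the bookkeeping in Theorem~\ref{theorem-ehi}) gives the converse, that $B$ satisfying the conjecture implies $A$ does.

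The main obstacle I anticipate is the gap between $\id_A A < \infty$ and $\id_B B < \infty$: Theorem~\ref{theorem-ehi}(i) gives one direction cleanly ($\id_A A<\infty \Rightarrow \id_B F(A)<\infty$), but $F(A)$ need not be the regular module $B$. To close this I would use that $\id_B B < \infty$ is equivalent to $\id_B N < \infty$ with a uniform bound for all $N\in\proj B$, and that $F$ restricted to projectives, together with essential surjectivity, lets one pull finiteness of injective dimension back and forth; alternatively, invoke that $\id_B B<\infty$ together with Theorem~\ref{theorem-Gor} already pins down Gorensteinness on the $A$-side once one also knows the $\op$-side is finite — so really the argument is best organized as a single chain: $A$ Gorenstein $\iff$ $B$ Gorenstein (Theorem~\ref{theorem-Gor}), and "satisfies Gorenstein symmetry" means "$\id$ finite on one side $\Rightarrow$ Gorenstein," so it suffices to check that "$\id_A A<\infty$" and "$\id_B B<\infty$" are simultaneously equivalent to Gorensteinness, which is immediate once one knows the implication in each direction from the finiteness results. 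I expect the write-up to be short once this logical skeleton is fixed.
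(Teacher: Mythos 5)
Your logical skeleton matches the paper's: reduce the Gorenstein symmetry conjecture to detecting Gorensteinness from one-sided finite injective dimension, then transfer Gorensteinness between $A$ and $B$ via Theorem~\ref{theorem-Gor}. You also correctly flag the crux: $F(A)$ need not equal $B$, so Theorem~\ref{theorem-ehi}(i) alone does not directly relate $\id_A A$ and $\id_B B$.

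However, you never actually close that gap, and the resolution you sketch (``$F$ restricted to projectives, together with essential surjectivity, lets one pull finiteness of injective dimension back and forth'') does not quite land: $F(A)$ is generally not projective over $B$, only of \emph{finite projective dimension}, so restricting to projectives is not enough. The paper's argument uses exactly that finite-projective-dimension observation: if $\id_B B<\infty$ then $K^b(\proj B)\subseteq K^b(\inj B)$; by Theorem~\ref{theorem-ehi}(i), $\pd_B F(A)\leq t<\infty$, hence $F(A)\in K^b(\proj B)\subseteq K^b(\inj B)$, i.e.\ $\id_B F(A)<\infty$; then Theorem~\ref{theorem-ehi}(ii) (with $Y=F(A)$, $Y'=A$) gives $\id_A A<\infty$, and the rest of your chain (GSC for $A$ $\Rightarrow$ $A$ Gorenstein $\Rightarrow$ $B$ Gorenstein by Theorem~\ref{theorem-Gor}) goes through. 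The reverse direction $\id_{B^{\op}}B<\infty\Rightarrow\id_{A^{\op}}A<\infty$ is handled dually via $\pd_B DB<\infty$ and $F(DA)\in K^b(\inj B)\subseteq K^b(\proj B)$, and the direction where $B$ satisfies the conjecture uses a preimage $B'$ with $F(B')\cong B$ together with the same $K^b(\proj)\subseteq K^b(\inj)$ containment on the $A$-side. So: right approach, but the decisive step (``finite $\pd$ plus $\id_B B<\infty$ forces finite $\id$, then pull back via Theorem~\ref{theorem-ehi}(ii)'') is missing from your write-up.
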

\begin{proof}
Assume that $A$ satisfies the Gorenstein symmetry conjecture. If $\id _BB<\infty$,
then $K^b(\proj B)\subseteq K^b(\inj B)$ and by Theorem~\ref{theorem-ehi}, we have
$F(A)\in K^b(\proj B)\subseteq K^b(\inj B)$, that is, $\id _BF(A)<\infty$.
Using Theorem~\ref{theorem-ehi} again we have $\id _AA<\infty$.
Since $A$
satisfies the Gorenstein symmetry conjecture, we obtain that $A$ is Gorenstein.
By Theorem~\ref{theorem-Gor}, $B$
is Gorenstein and thus $\id _{B^{op}}B <\infty$.
Conversely, if $\id _{B^{op}}B <\infty$, then $\pd _{B}DB <\infty$. Therefore,
$K^b(\inj B)\subseteq K^b(\proj B)$ and by Theorem~\ref{theorem-ehi}, we have
$F(DA)\in K^b(\inj B)\subseteq K^b(\proj B)$, that is, $\pd _BF(DA)<\infty$.
Using Theorem~\ref{theorem-ehi} again we have $\pd _ADA<\infty$, that is, $\id _{A^{op}}A <\infty$.
Since $A$
satisfies the Gorenstein symmetry conjecture, we obtain that $A$ is Gorenstein.
By Theorem~\ref{theorem-Gor}, $B$
is Gorenstein and thus $\id _BB<\infty$.

Now assume that $B$ satisfies the Gorenstein symmetry conjecture. If $\id _AA<\infty$,
then $K^b(\proj A)\subseteq K^b(\inj A)$. Let
$B'\in \mod A$ such that $F(B')\cong B$. Then it follows from Theorem~\ref{theorem-ehi}
that $B'\in K^b(\proj A)\subseteq K^b(\inj A)$, that is, $\id _AB'<\infty$.
Using Theorem~\ref{theorem-ehi} again we have $\id _BB<\infty$.
Since $B$
satisfies the Gorenstein symmetry conjecture, we obtain that $B$ is Gorenstein.
By Theorem~\ref{theorem-Gor}, $A$
is Gorenstein and thus $\id _{A^{op}}A <\infty$.
Conversely, if $\id _{A^{op}}A <\infty$, then the statement
$\id _AA<\infty$ can be proved in a similar way.
\end{proof}

Let $A$ be an algebra and $e$ be an idempotent of $A$.
Then $A$ and $eAe$ are singularly equivalent
if $eA\otimes _A-: \mod A\rightarrow
\mod eAe$ is an eventually homological isomorphism, see \cite[Main Theorem]{PSS14}.
On the other hand,
the arrow removal operation
yields an eventually homological isomorphism
which induces a singular equivalence, see \cite[Main Theorem]{EPS21}.
Now we will unify these two results by showing that
two algebras linked by
a certain eventually homological isomorphism are always singularly equivalent.

\begin{theorem}\label{theorem-sing-equiv}
Let $F:\mod A\rightarrow \mod B$ be a $t$-eventually homological isomorphism
which is essentially surjective. Assume that $F$ admits a left adjoint
$H$ and a right adjoint $G$. Then $H$ and $F$ induce a singular equivalence
between $A$ and $B$.
\end{theorem}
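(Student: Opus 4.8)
The plan is to lift the functor $F$ to the level of derived categories and show that the induced functor on singularity categories is a triangle equivalence, with quasi-inverse coming from the left adjoint $H$. First I would observe that, since $F$ admits a left adjoint $H$ and a right adjoint $G$, $F$ is exact; moreover $H$ is right exact and $G$ is left exact, and the adjunction pairs $(H,F)$ and $(F,G)$ extend to adjunction pairs on the unbounded derived categories $(\mathbf{L}H, F)$ and $(F, \mathbf{R}G)$ between $\mathcal{D}A$ and $\mathcal{D}B$. Since $F$ is exact it sends $\mathcal{D}^b(\mod A)$ into $\mathcal{D}^b(\mod B)$; the key point, which I would extract from Theorem~\ref{theorem-ehi}(i), is that $F$ also sends $K^b(\proj A)$ into $K^b(\proj B)$, because $\pd_B F(P)\le t$ for every projective $P$, so $F$ restricts to $K^b(\proj)$ in the terminology fixed at the end of Section~\ref{Section-definitions and conventions}. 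Hence $F$ descends to a triangle functor $\overline{F}:D_{sg}(A)\to D_{sg}(B)$.

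Next I would analyze $\mathbf{L}H$. Because $F$ is essentially surjective, the counit $\mathbf{L}H\circ F\to \id$ and the behaviour of the unit $\id\to F\circ\mathbf{L}H$ need to be controlled; the eventually-homological-isomorphism hypothesis is exactly what forces the cone of each of these natural transformations to be a complex of finite projective dimension, hence to vanish in the singularity category. Concretely, for $X\in\mathcal{D}^b(\mod A)$ one compares $\Hom_{\mathcal{D}A}(X,Y)$ with $\Hom_{\mathcal{D}B}(FX,FY)$ in high degrees via the $\Ext$-isomorphisms, so that the adjunction unit becomes invertible after passing to $D_{sg}$; this is where I would invoke that a morphism in $\mathcal{D}^b$ whose cone lies in $K^b(\proj)$ becomes an isomorphism in $D_{sg}$. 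One also has to check $\mathbf{L}H$ sends $\mathcal{D}^b(\mod B)$ to $\mathcal{D}^b(\mod A)$ and $K^b(\proj B)$ to $K^b(\proj A)$; the latter is immediate since $H$ is left adjoint to the exact functor $F$, so $H$ preserves projectives, and boundedness of $\mathbf{L}H$ on $\mathcal{D}^b$ should follow because $F$ restricts to $\mathcal{D}^b(\mod)$ together with the adjunction. Thus $\mathbf{L}H$ descends to $\overline{H}:D_{sg}(B)\to D_{sg}(A)$.

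Finally I would assemble the equivalence: $\overline{H}$ is left adjoint to $\overline{F}$ on the singularity categories, and the two steps above show the unit and counit of this adjunction are isomorphisms, so $\overline{H}$ and $\overline{F}$ are mutually quasi-inverse triangle equivalences; this is the asserted singular equivalence induced by $H$ and $F$. The main obstacle I anticipate is the control of $\mathbf{L}H$: unlike $F$, the left adjoint is only right exact, so its derived functor can be unbounded a priori, and one must use the eventual-homological-isomorphism bound (via Theorem~\ref{theorem-ehi}, applied on the $B$-side to a preimage $B'$ with $FB'\cong B$) to guarantee that $\mathbf{L}H$ of a bounded complex is again bounded and that the adjunction (co)units have perfect cones. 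Verifying that the cone of $\id\to F\circ\mathbf{L}H$ (equivalently $\mathbf{L}H\circ F\to\id$) is perfect — rather than merely bounded — is the technical heart, and I expect it to rest on a careful truncation argument combined with the $\Ext$-isomorphisms in degrees $>t$.
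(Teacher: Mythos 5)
Your overall plan is the same as the paper's: lift to derived categories via the adjoint triple $(\mathbf{L}H,F,\mathbf{R}G)$, show both $F$ and $\mathbf{L}H$ descend to the singularity categories, and then prove the descended adjunction is an equivalence by showing the (co)unit has perfect cone. You correctly observe that $F$ restricts to $\mathcal{D}^b(\mod)$ and $K^b(\proj)$, and your observation that $H$ preserves finitely generated projectives (since it is left adjoint to an exact functor) is a clean way to see that $\mathbf{L}H$ preserves $K^b(\proj)$.

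However, the step where you assert that boundedness of $\mathbf{L}H$ on $\mathcal{D}^b(\mod)$ ``should follow because $F$ restricts to $\mathcal{D}^b(\mod)$ together with the adjunction'' is a genuine gap: these hypotheses alone do not control the amplitude of $\mathbf{L}H$. The paper extracts more from Theorem~\ref{theorem-ehi}: $F$ also restricts to $K^b(\inj)$ (because $\id_B F(X)\leq\sup\{t,\id_A X\}$), and then a lemma of Qin--Han (\cite[Lemma 1]{QH16}) yields that $\mathbf{L}H$ restricts to $\mathcal{D}^b(\mod)$. Without invoking the $K^b(\inj)$ restriction — which is precisely where the eventually-homological-isomorphism hypothesis enters at this stage — you cannot conclude that $\mathbf{L}H$ of a bounded complex is again bounded, so the counit $\eta_X:\mathbf{L}HFX\to X$ need not live in $\mathcal{D}^b(A)$ at all. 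Your anticipation of the ``technical heart'' (perfectness of the cone of $\eta_X$) is accurate in spirit; the paper carries it out by testing $\Cone(\eta_X)$ against $S[i]$ for simple $S$ and $i>t+1$ using the $\Ext$-isomorphisms, then arguing that an object of $\mathcal{D}^b(A)$ whose maps into all large shifts of simples vanish must lie in $K^b(\proj A)$ via the minimal projective presentation of the complex. You should fill in both of these missing pieces; the first requires the $K^b(\inj)$ ingredient, and the second requires the minimal-complex argument.
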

\begin{proof}
Since $(H,F,G)$ is an adjoint triple, it follows that $H$ is right exact,
$G$ is left exact and $F$ is exact. Therefore, these derived functors
give rise to an adjoint triple $(LH,F,RG)$ between
$\mathcal{D}A$ and $\mathcal{D}B$.
Moreover, the exactness of $F$ implies that $F$ restricts to $\mathcal{D}^b(\mod )$,
and then $LH$ restricts to $K^b(\proj)$ by \cite[Lemma 2.7]{AKLY17}.
It follows from Theorem~\ref{theorem-ehi} that
$F$ restricts to $K^b(\proj)$ and $K^b(\inj)$, and then $LH$ restricts to
$\mathcal{D}^b(\mod )$ by \cite[Lemma 1]{QH16}.
Therefore, the functors $LH$ and $F$ induce an adjoint pair between
$D_{sg}(A)$ and $D_{sg}(B)$, see \cite[Lemma 1.2]{Orl04}.
Now we claim $F:D_{sg}(A)\rightarrow D_{sg}(B)$ is fully faithful and dense, and then
$$\xymatrix{ D_{sg}(B)\ar@<+1ex>[rr]^{LH}
&&D_{sg}(A)\ar@<+1ex>[ll]^{F}}$$ is a mutually inverse equivalence.

For any $X^\bullet \in D_{sg}(B)$, there exists some $n\in \mathbb{Z}$
and $X\in \mod B$ such that $X^\bullet \cong X[n]$ in $D_{sg}(B)$,
see \cite[Lemma 2.1]{Chen11}. Since $F:\mod A\rightarrow \mod B$ is essentially surjective,
we may assume that $X\cong F(X')$ for some $X'\in \mod A$.
Hence, we have $X^\bullet \cong F(X')[n]\cong F(X'[n])$ in $D_{sg}(B)$,
and thus $F:D_{sg}(A)\rightarrow D_{sg}(B)$ is dense.

For any $X^\bullet$,$Y^\bullet \in D_{sg}(A)$,
there exist $m$, $n\in \mathbb{Z}$
and $X$, $Y\in \mod A$ such that $X^\bullet \cong X[m]$ and
$Y^\bullet \cong Y[n]$ in $D_{sg}(A)$, see \cite[Lemma 2.1]{Chen11}.
Let $\eta$ be the counit of the adjoint pair
$(LH,F)$ between $\mathcal{D}^b(B)$ and $\mathcal{D}^b(A)$.
Then there is a canonical triangle $$ LHFX\rightarrow
X \rightarrow \Cone (\eta _X)\rightarrow $$ in $\mathcal{D}^b(A)$.
Let $S$ be a simple $A$-module.
Applying the functor $\Hom _{\mathcal{D}A}(-, S[i]) $,
we get an exact sequence
$$\Hom _{\mathcal{D}A}(\Cone (\eta _X), S[i]) \rightarrow
\Hom _{\mathcal{D}A}(X, S[i]) \rightarrow \Hom _{\mathcal{D}A}(LHFX, S[i]) \rightarrow .$$
For any $i>t$, we have isomorphisms
\begin{align*}
	\Hom _{\mathcal{D}A}(LHFX, S[i]) &\cong \Hom _{\mathcal{D}B}(FX, FS[i])\\
	&\cong \Ext _B^i(FX, FS) \\
	&\cong \Ext _A^i(X, S) \\
    & \cong \Hom _{\mathcal{D}A}(X, S[i]),
	\end{align*}
where the first isomorphism follows by adjunction, and the third one
is the definition of $t$-eventually homological isomorphism.
Therefore, we obtain
$\Hom _{\mathcal{D}A}(\Cone (\eta _X), S[i])\cong 0 $, for any $i>t+1$.
Now we claim that $\Cone (\eta _X) \in K^b(\proj A)$.
Since $\Cone (\eta _X)\in \mathcal{D}^b(A)$, $\Cone (\eta _X)$ is quasi-isomorphic to
a minimal right bounded complex
of finitely generated projective
$A$-modules. If this complex is not bounded, then some indecomposable
projective A-module with simple top $S$ occurs infinitely many times.
It follows that there are nonzero morphisms from  this complex to infinitely many
positive shifts of $S$, that is,
$\Hom _{\mathcal{D}A}(\Cone (\eta _X), S[i]) \neq 0$ for infinite many $i\in \mathbb{Z}^+$.
But this is a contradiction. Therefore, $\Cone (\eta _X) \in K^b(\proj A)$
and thus $LHFX\cong X$ in $D_{sg}(A)$. As a result, we have isomorphisms
\begin{align*}
	\Hom _{D_{sg}(A)}(X^\bullet, Y^\bullet) &\cong \Hom _{D_{sg}(A)}(X[m], Y[n])\\
	&\cong  \Hom _{D_{sg}(A)}(LHFX[m], Y[n]) \\
	&\cong  \Hom _{D_{sg}(B)}(FX[m], FY[n]) \\
    & \cong \Hom _{D_{sg}(B)}(FX^\bullet, FY^\bullet),
	\end{align*}
and then $F:D_{sg}(A)\rightarrow D_{sg}(B)$ is fully faithful.
\end{proof}

Now we will investigate Gorenstein defect categories
in the setting of eventually homological isomorphisms.
We start with the following result.
\begin{lemma}\label{lemma-perp}
Let $F:\mod A\rightarrow \mod B$ be a $t$-eventually homological isomorphism
which is essentially surjective.
Then $\Omega ^t(FX)\in {}^{\perp} B$ for any $X\in {}^{\perp}A$.
\end{lemma}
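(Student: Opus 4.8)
The statement to prove is: if $F\colon \mod A \to \mod B$ is a $t$-eventually homological isomorphism which is essentially surjective, then $\Omega^t(FX) \in {}^{\perp}B$ for every $X \in {}^{\perp}A$. Recall ${}^{\perp}A$ consists of modules $X$ with $\Ext^i_A(X,A)=0$ for all $i>0$. So I need to show that $\Ext^i_B(\Omega^t(FX), B) = 0$ for all $i>0$, i.e. that $\Ext^{i+t}_B(FX, B) = 0$ for all $i>0$ (using the standard syzygy-shift identity $\Ext^i_B(\Omega^t M, N) \cong \Ext^{i+t}_B(M,N)$ valid for $i \ge 1$), equivalently $\Ext^j_B(FX, B)=0$ for all $j > t$.

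**Main argument.**

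The plan is to reduce the vanishing of $\Ext^j_B(FX,B)$ for $j>t$ to the vanishing of $\Ext^j_A(X,A)$, which holds by hypothesis since $X\in{}^{\perp}A$. The key point is that $B$, as a $B$-module, need not be of the form $F(\text{something})$, so I cannot directly apply the $\Ext$-isomorphism of the eventually homological isomorphism to the pair $(FX, B)$. The device to get around this is to test against \emph{simple} modules instead of against $B$ itself, exactly as in the proof of Theorem~\ref{theorem-sing-equiv}. First I would observe that a module $N\in\mod B$ satisfies $\Ext^j_B(M,N)=0$ for all $j > t$ as soon as $\Ext^j_B(M,S)=0$ for all $j>t$ and all simple $B$-modules $S$; this follows by dévissage along a composition series of $N$ and the long exact sequence in $\Ext$. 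Applying this with $N = B$, it suffices to show $\Ext^j_B(FX, S)=0$ for every simple $B$-module $S$ and every $j>t$.

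**The core computation.**

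Now fix a simple $B$-module $S$ and $j>t$. Since $F$ is essentially surjective, write $S \cong F(S')$ for some $S'\in\mod A$ (note $S'$ need not be simple, but that does not matter). By the defining property of the $t$-eventually homological isomorphism, for $j>t$ we have
\[
\Ext^j_B(FX, S) \;\cong\; \Ext^j_B(FX, FS') \;\cong\; \Ext^j_A(X, S').
\]
So I am reduced to showing $\Ext^j_A(X, S')=0$ for all $j>t$. Here is where $X\in{}^{\perp}A$ enters: since $\Ext^i_A(X,A)=0$ for all $i>0$, and since $S'$ has a finite filtration whose subquotients are simple $A$-modules, it is enough to know $\Ext^j_A(X, T)=0$ for $j>t$ for every simple $A$-module $T$ — but actually I want to go the other direction. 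Better: $\Ext^j_A(X,A)=0$ for all $j\ge 1$ gives, by the syzygy shift, $\Ext^j_A(\Omega^t X, A)=0$ for all $j\ge 1$, hence (again by dévissage over a composition series of $A$, or directly) $\Ext^j_A(X, P)=0$ for $j>t$ for every $P\in\proj A$; but I need it for arbitrary finitely generated modules, which fails in general. So the cleaner route is: since $X\in{}^{\perp}A$, $\Ext^{>0}_A(X,A)=0$, and by Theorem~\ref{theorem-ehi}(i) we do not even need this — wait, the honest path is that $\Ext^j_A(X,A)=0$ for $j>0$ means precisely $\Omega^t X$ (in fact $\Omega^1 X$) lies in ${}^{\perp}A$, and I should instead run the \emph{same} simple-testing argument one more time but now over $A$ with target $S'$ replaced by $A$: by dévissage, $\Ext^j_B(FX,B)=0$ for $j>t$ follows if $\Ext^j_B(FX,S)=0$ for all simple $S$ and $j>t$, which by the displayed isomorphism equals $\Ext^j_A(X,S')$; and by dévissage again over a composition series of $S'$ this reduces to $\Ext^j_A(X,T)=0$ for all simple $A$-modules $T$ and $j>t$. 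Finally, $\Ext^j_A(X,T)=0$ for $j>t$ and all simple $T$ is equivalent (by yet another dévissage, now over a composition series of $A$) to $\Ext^j_A(X,A)=0$ for $j>t$, which holds since $X\in{}^{\perp}A$.

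**Anticipated obstacle.**

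The only subtlety — and the step I would write out most carefully — is the passage through composition series: I use repeatedly that, for a fixed $M$, the class of modules $N$ with $\Ext^{j}(M,N)=0$ for all $j$ in a fixed cofinite set $\{j : j>t\}$ is closed under extensions (immediate from the long exact sequence), hence contains every module with a filtration by simples once it contains all simples, and conversely each simple is a subquotient of the regular module so vanishing against $A$ propagates to vanishing against all simples and back. One must be slightly careful that the index set $\{j>t\}$ is an "upward-closed" set so that the long exact sequence argument genuinely closes up (it does: if $0\to N'\to N\to N''\to 0$ and $\Ext^j(M,N')=\Ext^j(M,N'')=0$ for all $j>t$, then $\Ext^j(M,N)=0$ for all $j>t$). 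I expect no difficulty beyond bookkeeping; the conceptual content is entirely the simple-module-testing trick already deployed in Theorem~\ref{theorem-sing-equiv}.
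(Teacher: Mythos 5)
Your reduction to testing against simple $B$-modules is fine, and passing $S \cong FS'$ through the $\Ext$-isomorphism to land on $\Ext^j_A(X,S')$ for $j>t$ is also fine. The gap is the final step: you assert that $\Ext^j_A(X,A)=0$ for $j>t$ is \emph{equivalent} to $\Ext^j_A(X,T)=0$ for all simple $A$-modules $T$ and $j>t$, ``by dévissage over a composition series of $A$''. Dévissage in the second variable only runs one way: from a short exact sequence $0\to N'\to N\to N''\to 0$ and vanishing of $\Ext^j(X,N')$ and $\Ext^j(X,N'')$ for $j>t$, you get vanishing of $\Ext^j(X,N)$ for $j>t$. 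The converse --- vanishing on $N$ propagating to vanishing on the sub- and quotient $N'$, $N''$ --- is simply false, and in fact your claimed equivalence would say that $\Ext^j_A(X,T)=0$ for all simple $T$ and $j>t$, i.e.\ $\pd_A X\le t$. But a module in ${}^\perp A$ need not have finite projective dimension (every non-projective Gorenstein projective module is a counterexample). So the route through $\Ext^j_A(X,S')$ for an arbitrary lift $S'$ of a simple $B$-module cannot work.

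The paper avoids this entirely by \emph{not} passing to simples. Instead, one lifts $B$ itself: essential surjectivity gives $B'\in\mod A$ with $FB'\cong B$, and Theorem~\ref{theorem-ehi}(ii) applied with $Y=B$ (projective, so $\pd_B B=0$) gives $\pd_A B'\le t$. Then for $i>0$,
\[
\Ext^i_B(\Omega^t(FX),B)\cong\Ext^{i+t}_B(FX,FB')\cong\Ext^{i+t}_A(X,B'),
\]
and since $X\in{}^\perp A$ one may dimension-shift in the second argument using a projective resolution of $B'$: $\Ext^{i+t}_A(X,B')\cong\Ext^{i+2t}_A(X,\Omega^t B')=0$ because $\Omega^t B'$ is projective. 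The crucial difference is that the lift $B'$ of $B$ has bounded projective dimension (precisely because $B$ is projective over $B$), whereas a lift $S'$ of a simple $B$-module has no such control.
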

\begin{proof}
By definition, there exists some $B'\in \mod A$ such that $F(B')\cong B$,
and it follows from Theorem~\ref{theorem-ehi} that
$\pd _AB'\leq t$.
For any $i>0$, consider the isomorphisms
$$\Ext_B^i(\Omega ^t(FX),B)\cong \Ext_B^{i+t}(FX,B)\cong
\Ext_B^{i+t}(FX,F(B'))\cong \Ext_A^{i+t}(X,B'),$$
where the last equation holds because $F$ is a $t$-eventually homological isomorphism.
Since $X\in {}^{\perp}A$, we can use the projective resolution
of $B'$ to do dimension shifting, that is,
$\Ext_A^{i+t}(X,B')\cong \Ext_A^{i+2t}(X,\Omega ^tB')$.
Since $\pd _AB'\leq t$, we have that $\Omega ^tB'\in \proj A$
and thus $\Ext_A^{i+2t}(X,\Omega ^tB')\cong 0$.
Above all, we get $\Ext_B^i(\Omega ^t(FX),B)\cong0$ for any $i>0$.
\end{proof}

Denote by $D^b(A)_{fGd}$ the full subcategory of
$D^b(A)$ formed by those complexes quasi-isomorphic to bounded complex of Gorenstein projective objects.
Here, the definition of $D^b(A)_{fGd}$ agrees with that in \cite{Kato02},
where the objects in $D^b(A)_{fGd}$ are called complexes of finite Gorenstein projective dimension,
see \cite[Definition 2.7 and Proposition 2.10]{Kato02}.
 Moreover, $D^b(A)_{fGd}$ is a thick subcategory
of $D^b(A)$ generated by all the Gorenstein projective modules, that is,
$D^b(A)_{fGd}=\thick (\Gproj A)$, see \cite[Theorem 2.7]{LHZ22}.
The following equivalence
$$ \underline{\Gproj }A \cong D^b(A)_{fGd}/K^b(\proj A)$$
is well known, see \cite[Theorem 4.4.1]{Buch87}, \cite[Lemma 4.1]{CR20}
or \cite[Theorem 4]{PZ15} for examples.

The following lemma is an alternative description of Gorenstein projective objects.

\begin{lemma}\label{lemma-Gproj}{\rm (\cite[Lemma 5.1]{HP17})}
An object $X\in \mod A$ is Gorenstein projective if and only if
there are short exact sequences $0\rightarrow X^i\rightarrow P^{i+1}\rightarrow X^{i+1}\rightarrow 0$
in $\mod A$ with $ P^{i}$ projective and $X^i\in {}^{\perp}A$ for all $i \in \mathbb{Z}$ such that $X^0 = X$.
\end{lemma}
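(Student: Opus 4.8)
The plan is to establish the two implications separately; in both directions the key device is that a family of short exact sequences of the stated form splices into a single acyclic complex of projectives realising $X$ as its zeroth syzygy.

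First I would treat the ``only if'' direction. Given that $X$ is Gorenstein projective, take an exact complex $P^\bullet$ of projectives with $X=\Ker d^0$ and $\Hom_A(P^\bullet, Q)$ exact for all $Q\in\proj A$, as in the definition. Putting $X^i:=\Ker d^i=\Im d^{i-1}$ produces, after a harmless shift in the indexing of the $P^i$'s, short exact sequences of the required shape with $X^0=X$. The only point left is $X^i\in{}^{\perp}A$: truncating $P^\bullet$ on the right gives a projective resolution of $X^i$, and applying $\Hom_A(-,A)$ identifies $\Ext^j_A(X^i,A)$ for $j>0$ with the cohomology of $\Hom_A(P^\bullet,A)$ in a middle degree, which vanishes by hypothesis.

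Next, for the ``if'' direction, I would splice the given short exact sequences $0\to X^i\to P^{i+1}\to X^{i+1}\to 0$: the composites $P^{i+1}\twoheadrightarrow X^{i+1}\hookrightarrow P^{i+2}$ assemble into an acyclic complex $Q^\bullet$ with $Q^i=P^{i+1}$ and $\Ker(Q^0\to Q^1)=X^0=X$. To conclude that $X$ is Gorenstein projective it remains to check that $\Hom_A(Q^\bullet,R)$ is exact for every $R\in\proj A$. Since each $X^i\in{}^{\perp}A$ and $R$ is a direct summand of a free module, $\Ext^1_A(X^i,R)=0$, so $\Hom_A(-,R)$ carries each of the given short exact sequences to a short exact sequence $0\to\Hom_A(X^{i+1},R)\to\Hom_A(P^{i+1},R)\to\Hom_A(X^i,R)\to 0$; splicing these recovers $\Hom_A(Q^\bullet,R)$, which is therefore exact, and $X\in\Gproj A$.

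I expect no genuine obstacle: the statement is essentially a reformulation of the definition of Gorenstein projectivity, with the hypothesis ``$X^i\in{}^{\perp}A$ for all $i\in\mathbb{Z}$'' playing the role of the $\Hom$-acyclicity condition on the complete resolution. The points needing a little care are the index bookkeeping in the splicing (so that $X^0$ really lands as the zeroth syzygy) and the elementary reduction from $\Ext^{>0}_A(-,A)=0$ to $\Ext^{>0}_A(-,R)=0$ for an arbitrary projective $R$; the substantive content, namely that all syzygies and cosyzygies occurring in an exact, $\Hom$-acyclic complex of projectives lie in ${}^{\perp}A$, is exactly what the first step extracts.
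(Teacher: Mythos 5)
Your argument is correct and gives a complete proof of the lemma (which the paper itself only cites, to Hu--Pan). The ``only if'' direction correctly extracts the syzygies of a complete projective resolution and identifies $\Ext^j_A(X^i,A)$ for $j>0$ with the vanishing cohomology of $\Hom_A(P^\bullet,A)$; the ``if'' direction splices the given short exact sequences into an acyclic complex of projectives and checks $\Hom_A(Q^\bullet,R)$ is exact by applying $\Hom_A(-,R)$ to each piece. One tiny slip: to see that $\Hom_A(-,R)$ sends $0\to X^i\to P^{i+1}\to X^{i+1}\to 0$ to a short exact sequence, what you need is $\Ext^1_A(X^{i+1},R)=0$ (the vanishing for the cokernel term, not the kernel term $X^i$ as written), but since every $X^j\in{}^\perp A$ this is immediate and the argument goes through unchanged.
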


Next we show that certain eventually homological isomorphisms
preserve complexes of finite Gorenstein projective dimension.
\begin{lemma}\label{lemma-fGd}
Let $F:\mod A\rightarrow \mod B$ be a $t$-eventually homological isomorphism
which is exact and essentially surjective.
Then $\Omega ^t(FX)\in \Gproj B$ for any $X\in \Gproj A$, and
$F$ induces a triangle functor from $D^b(A)_{fGd}$ to $D^b(B)_{fGd}$.
\end{lemma}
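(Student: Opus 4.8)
The plan is to prove the first assertion directly from Lemma~\ref{lemma-Gproj}, and then bootstrap the second assertion from the first using the description $D^b(A)_{fGd} = \thick(\Gproj A)$. Let me sketch each half.

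For the first claim, fix $X \in \Gproj A$. By Lemma~\ref{lemma-Gproj} there are short exact sequences $0 \to X^i \to P^{i+1} \to X^{i+1} \to 0$ in $\mod A$ with each $P^i$ projective, each $X^i \in {}^{\perp}A$, and $X^0 = X$. Since $F$ is exact, applying $F$ yields short exact sequences $0 \to FX^i \to FP^{i+1} \to FX^{i+1} \to 0$ in $\mod B$. The modules $FP^{i+1}$ need not be projective, but by Theorem~\ref{theorem-ehi}(i) we have $\pd_B FP^{i+1} \leq t$ (as $\pd_A P^{i+1} = 0$). The idea now is to apply the syzygy functor $\Omega^t$ to this whole system: taking $t$-th syzygies turns each $FP^{i+1}$ into a projective module (since its projective dimension is at most $t$), and turns $FX^i$ into $\Omega^t(FX^i)$, which lies in ${}^{\perp}B$ by Lemma~\ref{lemma-perp} (here one uses $X^i \in {}^{\perp}A$). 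One must be slightly careful: applying $\Omega^t$ to a short exact sequence does not literally produce a short exact sequence, but by the horseshoe lemma, from $0 \to FX^i \to FP^{i+1} \to FX^{i+1} \to 0$ one obtains, after choosing compatible projective resolutions, short exact sequences $0 \to \Omega^t(FX^i) \oplus Q_i \to \Omega^t(FP^{i+1}) \oplus Q'_i \to \Omega^t(FX^{i+1}) \to 0$ for suitable projectives $Q_i, Q'_i$ — and since $\Omega^t(FP^{i+1})$ is already projective, the middle term is projective. Patching these together over all $i \in \mathbb{Z}$ (and absorbing the auxiliary projective summands, which does not affect membership in ${}^{\perp}B$ or projectivity), Lemma~\ref{lemma-Gproj} applied in $\mod B$ gives $\Omega^t(FX) \in \Gproj B$.

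For the second claim, recall $D^b(A)_{fGd} = \thick(\Gproj A)$ and $D^b(B)_{fGd} = \thick(\Gproj B)$ by \cite[Theorem 2.7]{LHZ22}. Since $F$ is exact, its derived functor restricts to a triangle functor $D^b(\mod A) \to D^b(\mod B)$; call it $F$ again. It therefore suffices to show $F$ sends the generating set $\Gproj A$ into $\thick(\Gproj B)$. But for $X \in \Gproj A$, there is a triangle $\Omega^t(FX) \to P^\bullet \to FX \to \Omega^t(FX)[1]$ in $D^b(\mod B)$, where $P^\bullet$ is a bounded complex of projective $B$-modules (the truncated projective resolution of $FX$ down to the $t$-th syzygy). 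Since $P^\bullet \in K^b(\proj B) \subseteq \thick(\Gproj B)$ and $\Omega^t(FX) \in \Gproj B \subseteq \thick(\Gproj B)$ by the first part, the triangle forces $FX \in \thick(\Gproj B) = D^b(B)_{fGd}$. As $\thick(\Gproj B)$ is a thick subcategory and $F$ is a triangle functor, $F$ maps all of $\thick(\Gproj A)$ into $\thick(\Gproj B)$, giving the desired triangle functor $D^b(A)_{fGd} \to D^b(B)_{fGd}$.

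The main obstacle I anticipate is the first half: making the syzygy-shifting argument across the doubly-infinite family of short exact sequences genuinely rigorous, i.e., checking that the auxiliary projective summands introduced by the horseshoe lemma can be managed coherently so that the resulting complete resolution of $\Omega^t(FX)$ is honestly exact and totally acyclic (equivalently, verifying the hypotheses of Lemma~\ref{lemma-Gproj} without gaps). Once that bookkeeping is done, the passage to $\thick$ and the second assertion are formal.
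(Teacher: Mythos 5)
Your proof is correct and follows essentially the same route as the paper's: apply Lemma~\ref{lemma-Gproj} to $X$, push the complete family of short exact sequences through the exact functor $F$, pass to $t$-th syzygies, invoke Lemma~\ref{lemma-perp} and Theorem~\ref{theorem-ehi} for the ${}^{\perp}B$- and projectivity-conditions, and conclude via Lemma~\ref{lemma-Gproj} again; then promote $\Omega^t(FX)\in\Gproj B$ to $FX\in\thick(\Gproj B)$ and use $D^b(-)_{fGd}=\thick(\Gproj -)$ to finish. The one place where your write-up is looser than the paper's is the horseshoe step: you allow auxiliary projective summands on the left-hand term $\Omega^t(FX^i)\oplus Q_i$, which breaks the gluing condition $X^i_{\text{(right end at step }i-1)} = X^i_{\text{(left end at step }i)}$ required by Lemma~\ref{lemma-Gproj}, and then you flag the needed bookkeeping as a worry. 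The paper avoids this entirely by fixing a single minimal projective resolution of each $FX^i$ and applying the horseshoe lemma to the sequence $0\to FX^i\to FP^{i+1}\to FX^{i+1}\to 0$, so the extracted short exact sequences take the form $0\to\Omega^t(FX^i)\to\Omega^t(FP^{i+1})\oplus Q^{i+1}\to\Omega^t(FX^{i+1})\to 0$ with the auxiliary projective appearing only in the middle; these then patch automatically across $i$, and since $\Omega^t(FP^{i+1})$ is projective (by Theorem~\ref{theorem-ehi}, as $\pd_B FP^{i+1}\le t$) the middle terms are projective with no further adjustment. Your triangle argument for the second half, using $\Omega^t(FX)\to P^\bullet\to FX\to$, is a correct (and slightly more explicit) justification of the paper's assertion that $\Omega^t(FX)\in\Gproj B$ implies $FX\in\thick(\Gproj B)$.
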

\begin{proof}
Since $F:\mod A\rightarrow \mod B$ is exact, we have an induced functor
$F: D^b(A)\rightarrow D^b(B)$. Let $X\in \mod A$ be a Gorenstein projective module.
By Lemma~\ref{lemma-Gproj}, there are short exact sequences
$0\rightarrow X^i\rightarrow P^{i+1}\rightarrow X^{i+1}\rightarrow 0$
in $\mod A$ with $ P^{i}$ projective and $X^i\in {}^{\perp}A$ for all $i \in \mathbb{Z}$ such that $X^0 = X$.
Since $F$ is exact, the sequences $$0\rightarrow FX^i\rightarrow FP^{i+1}\rightarrow FX^{i+1}\rightarrow 0$$
are exact, and these lead to exact sequences $$0\rightarrow \Omega ^t(FX^i)\rightarrow
\Omega ^t(FP^{i+1})\oplus Q^{i+1}\rightarrow \Omega ^t(FX^{i+1})\rightarrow 0,$$
where $Q^{i+1}\in \proj B$. Since $X^i\in {}^{\perp}A$, it follows from
Lemma~\ref{lemma-perp} that $\Omega ^t(FX^i)\in {}^{\perp}B$ for all $i \in \mathbb{Z}$,
and by Theorem~\ref{theorem-ehi},
$\Omega ^t(FP^{i+1})\in \proj B$ for all $i \in \mathbb{Z}$. Now Lemma~\ref{lemma-Gproj}
shows that
$\Omega ^t(FX)\in \Gproj B$, and then $FX\in \thick (\Gproj B)$.
Above all, we conclude that $FX \in D^b(B)_{fGd}$ for any $X\in \Gproj A$. Therefore,
$$F (D^b(A)_{fGd})=F(\thick (\Gproj A))\subseteq \thick F(\Gproj A)\subseteq D^b(B)_{fGd}.$$
\end{proof}

Following \cite{HP17},
a triangle functor $F: D^b(A)\rightarrow D^b(B)$ is said to be
{\it non-negative} if $F$ satisfies the following conditions:
(1) $F(X)$ is isomorphic to a complex with zero homology in all
negative degrees, for all $X \in \mod A$;
(2) $F(A)$ is isomorphic to a complex in $K^b(\proj B)$
with zero terms in all negative degrees.
Now we are ready to compare the Gorenstein defect categories and the stable categories of
Gorenstein projective modules between two algebras linked by an eventually homological isomorphism.
\begin{theorem}\label{theorem-Gproj-equiv}
Let $F:\mod A\rightarrow \mod B$ be a $t$-eventually homological isomorphism
which is essentially surjective. Assume that $F$ admits a left adjoint
$H$ and a right adjoint $G$.
Then $F$
induces triangle equivalences
$ \underline{\Gproj }A\cong \underline{\Gproj }B$ and
$\ D_{def}(A)\cong D_{def}(B)$.
\end{theorem}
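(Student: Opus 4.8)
The plan is to build the equivalence $\underline{\Gproj}\,A \cong \underline{\Gproj}\,B$ by transporting it through the derived-categorical description $\underline{\Gproj}\,A \cong D^b(A)_{fGd}/K^b(\proj A)$ recalled just before Lemma~\ref{lemma-Gproj}, and then to deduce the equivalence of Gorenstein defect categories as a Verdier quotient of the singular equivalence already established in Theorem~\ref{theorem-sing-equiv}. Concretely, I first invoke Lemma~\ref{lemma-fGd}: since $F$ is exact (being one-half of an adjoint triple, as noted in the proof of Theorem~\ref{theorem-sing-equiv}), $F$ restricts to a triangle functor $F\colon D^b(A)_{fGd} \to D^b(B)_{fGd}$ sending $K^b(\proj A)$ into $K^b(\proj B)$, hence induces a triangle functor $\bar F\colon \underline{\Gproj}\,A \to \underline{\Gproj}\,B$. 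The left adjoint $H$ gives, via $LH$, a candidate quasi-inverse; the first thing to check is that $LH$ also restricts to the subcategories $D^b(-)_{fGd}$. For this I would show that $LH$ preserves finite Gorenstein projective dimension: by Theorem~\ref{theorem-ehi}(ii) the module $B' \in \mod A$ with $F(B') \cong B$ has $\pd_A B' \le t$, and dually the relevant modules on the $A$-side have bounded projective dimension; combined with the fact (mentioned in the paper's introduction, and following from our hypotheses via Theorem~\ref{theorem-ehi}) that $H$ preserves modules of finite Gorenstein projective dimension, one gets $LH(D^b(B)_{fGd}) \subseteq D^b(A)_{fGd}$, so $LH$ descends to $\underline{LH}\colon \underline{\Gproj}\,B \to \underline{\Gproj}\,A$.

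Next I would verify that $\underline{LH}$ and $\bar F$ are mutually inverse. Density of $\bar F$ is immediate from essential surjectivity of $F\colon \mod A \to \mod B$ together with Lemma~\ref{lemma-fGd}: any Gorenstein projective $B$-module is, up to the triangulated structure, $\Omega^t(FX)$ for some $X \in \Gproj A$, hence lies in the image of $\bar F$ in $\underline{\Gproj}\,B$. For full faithfulness, the key input is the isomorphism $LHFX \cong X$ in $D_{sg}(A)$ proved inside Theorem~\ref{theorem-sing-equiv}; one needs to upgrade this to an isomorphism in $\underline{\Gproj}\,A = D^b(A)_{fGd}/K^b(\proj A)$, i.e. to check that the cone $\Cone(\eta_X)$ — already shown to lie in $K^b(\proj A)$ — does so within the thick subcategory $D^b(A)_{fGd}$, which is automatic since $K^b(\proj A) \subseteq D^b(A)_{fGd}$. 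Then the same four-step chain of isomorphisms as in the proof of Theorem~\ref{theorem-sing-equiv}, but computed in the quotient by $K^b(\proj)$ rather than by $K^b(\proj)$ inside all of $\mathcal D^b$, gives
$$\Hom_{\underline{\Gproj}A}(X,Y) \cong \Hom_{\underline{\Gproj}A}(LHFX,Y) \cong \Hom_{\underline{\Gproj}B}(FX,FY),$$
so $\bar F$ is fully faithful and hence an equivalence with quasi-inverse $\underline{LH}$.

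Finally, for the Gorenstein defect categories I would use the commuting square relating $\underline{\Gproj}$, $D_{sg}$ and $D_{def}$. By Theorem~\ref{theorem-sing-equiv} the functor $F$ induces an equivalence $D_{sg}(A) \cong D_{sg}(B)$, and by the above it induces an equivalence $\underline{\Gproj}\,A \cong \underline{\Gproj}\,B$ compatible with the canonical embeddings $\underline{\Gproj}\,A \hookrightarrow D_{sg}(A)$ and $\underline{\Gproj}\,B \hookrightarrow D_{sg}(B)$ (compatibility holds because both are realized by the same underlying functor $F$ on complexes). Passing to Verdier quotients by the respective images, $F$ induces a triangle equivalence $D_{def}(A) = D_{sg}(A)/\operatorname{Im} \cong D_{sg}(B)/\operatorname{Im} = D_{def}(B)$. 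The main obstacle I anticipate is the verification that $LH$ preserves finite Gorenstein projective dimension — that is the one place where the existence of the right adjoint $G$ is genuinely needed (to get the dual bounded-dimension statement feeding into the preservation argument), and it must be handled carefully rather than quoted; everything else is bookkeeping with the already-established singular equivalence and the thick-subcategory description of $\underline{\Gproj}$.
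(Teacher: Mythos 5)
Your roadmap matches the paper's at a high level (identify $\underline{\Gproj}$ with $D^b(-)_{fGd}/K^b(\proj)$, use $F$ and $LH$, deduce the defect-category equivalence from the singular equivalence of Theorem~\ref{theorem-sing-equiv}), but the step you flag as ``the main obstacle'' is indeed left as a gap, and your workaround is circular: you invoke ``the fact ... that $H$ preserves modules of finite Gorenstein projective dimension,'' but that is precisely what you need to prove, not a known fact available to cite. Bounding $\pd_A B'$ via Theorem~\ref{theorem-ehi} says nothing about images of Gorenstein projective $B$-modules under $LH$. The paper handles this by showing that $LH$, having both an exact right adjoint $F$ restricting to $K^b(\proj)$ and (via \cite[Lemma 2.8]{AKLY17}) a left adjoint restricting to $K^b(\proj)$, is a \emph{non-negative} triangle functor up to shift (\cite[Lemma 3.4]{CHQW20}); then \cite[Proposition 5.2]{HP17} gives that the stable functor $\overline{LH}$ preserves Gorenstein projectives, and the triangle $P^\bullet_X \to LH(X) \to \overline{LH}(X) \to$ with $P^\bullet_X \in K^b(\proj A)$ yields $LH(\Gproj B) \subseteq D^b(A)_{fGd}$. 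This is the substantive input, and it is where the right adjoint $G$ actually earns its keep (via exactness of $F$ and the existence of a double adjoint), not in some ``dual bounded-dimension statement'' as you suggest.

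There is a second, independent error: your density argument for $\bar F$ does not work. Essential surjectivity of $F$ on $\mod A$ gives, for a Gorenstein projective $B$-module $Y$, some $X \in \mod A$ with $FX \cong Y$, but there is no reason for this $X$ to be Gorenstein projective, and Lemma~\ref{lemma-fGd} only goes in the other direction (from $\Gproj A$ into $\Gproj B$ after applying $\Omega^t$). The paper avoids having to prove density at all: once $LH$ and $F$ are known to restrict, \cite[Lemma 1.2]{Orl04} gives an adjoint pair $(\widetilde{LH}, \widetilde{F})$ between $\underline{\Gproj} B$ and $\underline{\Gproj} A$ fitting into the commutative diagram of short exact sequences of triangulated categories with $D_{sg}$ in the middle; since the middle vertical functors are mutually inverse equivalences by Theorem~\ref{theorem-sing-equiv}, the unit and counit of the restricted adjunction are isomorphisms, so $\widetilde{F}$ is an equivalence, and the $D_{def}$ equivalence then falls out of the same diagram. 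You should replace your fully-faithful-plus-dense strategy with this adjunction/diagram argument, and fill the restriction step with the non-negative-functor machinery.
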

\begin{proof}
Since $(H,F)$ is an adjoint pair, we infer that
$H$ preserves direct sums and $H$ is right exact. By Watt's theorem,
$H$ is isomorphic to $H(B)\otimes _B-:\mod B\rightarrow \mod A$,
where the right $B$-module structure of $H(B)$ is given by
$B\cong \Hom_B(B,B)\rightarrow \Hom_A(H(B),H(B))$. Now
consider the derived functor $LH\cong H(B)\otimes _B^L-:\mathcal{D} B\rightarrow \mathcal{D} A$.
By the proof of Theorem~\ref{theorem-sing-equiv}, we have that
$LH$ and $F$ restrict to both $\mathcal{D}^b(\mod)$ and $K^b(\proj )$,
and using \cite[Lemma 2.8]{AKLY17},
we get that $LH$ has a left adjoint which restricts to $K^b(\proj )$.
It follows from \cite[Lemma 3.4]{CHQW20} that $LH$ restricts to
a non-negative functor from $\mathcal{D}^b(B)$ to $\mathcal{D}^b(A)$, up to shifts.
By \cite[Proposition 5.2]{HP17},
the stable functor $\overline{LH}$ preserves Gorenstein projective
modules. According to \cite[Section 4.2]{HP17},
each $X\in \mod B$ yields a triangle $$P^\bullet _X\rightarrow LH(X)\rightarrow
\overline{LH}(X)\rightarrow$$ in $\mathcal{D}A$ with $P^\bullet _X\in K^b(\proj A)$.
Therefore, $LHX \in D^b(A)_{fGd}$ for any $X\in \Gproj B$, and then
$LH$ sends the objects of $D^b(B)_{fGd}$ to $D^b(A)_{fGd}$.
In view of Lemma~\ref{lemma-fGd}, $LH$ and $F$ induce an adjoint pair between
$D^b(B)_{fGd}/ K^b(\proj B)$ and $D^b(A)_{fGd}/ K^b(\proj A)$, see \cite[Lemma 1.2]{Orl04}.
Thanks to the equivalence
$ \underline{\Gproj }A \cong D^b(A)_{fGd}/K^b(\proj A)$, we obtain
an adjoint pair
$$\xymatrix{ \underline{\Gproj} B\ar@<+1ex>[rr]^{\widetilde{LH}}
&&\underline{\Gproj} A\ar@<+1ex>[ll]^{\widetilde{F}}},$$
and combining Theorem~\ref{theorem-sing-equiv}, we have the following exact commutative diagram
$$\xymatrix{0 \ar[r] &\underline{\Gproj} A \ar[r] \ar@<-1ex>[d]_ {\widetilde{F}} & D_{sg}(A)
\ar@<-1ex>[d]_F \ar[r] & D_{def}(A) \ar[r] & 0
\\ 0 \ar[r] &\underline{\Gproj} B \ar[r] \ar@<-1ex>[u]_{\widetilde{LH}} & D_{sg}(B) \ar[r] \ar@<-1ex>[u]_{LH} & D_{def}(B) \ar[r] & 0 }, $$
where the vertical functors between $D_{sg}(A)$ and $D_{sg}(B)$ are equivalences.
Hence, $LH$ and $F$ induce an equivalence between $\underline{\Gproj} A$ and $\underline{\Gproj} B$,
and also, there is an equivalence between $D_{def}(A)$ and $D_{def}(B)$.
\end{proof}

Now let's turn to the invariance of the Auslander-Reiten conjecture
(resp. Gorenstein projective conjecture) under  eventually
homological isomorphisms.
\begin{theorem}\label{theorem-ARC}
Assume that $F:\mod A\rightarrow \mod B$ satisfies all the conditions
in Theorem~\ref{theorem-Gproj-equiv}.
Then $A$ satisfies the Auslander-Reiten conjecture
(resp. Gorenstein projective conjecture) if and only
if so does $B$.
\end{theorem}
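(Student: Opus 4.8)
The plan is to translate the Auslander--Reiten and Gorenstein projective conjectures into statements about the triangle equivalences already established in Theorem~\ref{theorem-Gproj-equiv} and Theorem~\ref{theorem-sing-equiv}, together with the behaviour of $F$ on projectives and $\Ext$-groups coming from the $t$-eventually homological isomorphism hypothesis.

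First I would handle the Gorenstein projective conjecture, since it is closest to the equivalence $\underline{\Gproj}\,A\cong\underline{\Gproj}\,B$. Suppose $B$ satisfies the Gorenstein projective conjecture and let $M\in\Gproj A$ with $\Ext^i_A(M,M)=0$ for all $i\geq 1$; I want to show $M$ is projective, i.e. zero in $\underline{\Gproj}\,A$. The idea is to pass to $\widetilde{F}(M)\in\underline{\Gproj}\,B$. One checks (using Lemma~\ref{lemma-fGd}, which gives $\Omega^t(FM)\in\Gproj B$, and the fact that in the stable category $FM$ and $\Omega^t(FM)$ differ by a shift) that it suffices to show $\Omega^t(FM)$ is projective. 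For $i\geq 1$ large enough one has
\[
\Ext^i_B(\Omega^t(FM),\Omega^t(FM))\cong \Ext^{i+2t}_B(FM,FM)\cong \Ext^{i+2t}_A(M,M)=0,
\]
using dimension shifting on both sides (legitimate since $\Omega^t(FM)\in{}^\perp B$ by Lemma~\ref{lemma-perp}, applied to $M\in\Gproj A\subseteq{}^\perp A$) and the $t$-eventually homological isomorphism in the middle. Since $\Omega^t(FM)$ is Gorenstein projective, self-orthogonality in high degrees forces it in all positive degrees (a standard fact for Gorenstein projective modules: $\Ext^{\geq 1}$ between them is periodic up to syzygy shift, or one reduces via the complete resolution), so $\Omega^t(FM)$ lies in ${}^\perp B$ with vanishing self-extensions; applying the conjecture for $B$ gives that it is projective, hence $FM=0$ in $\underline{\Gproj}\,B$, hence $M=0$ in $\underline{\Gproj}\,A$ by the equivalence $\widetilde F$. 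The converse direction is symmetric, running the same argument with the quasi-inverse $\widetilde{LH}$ and using Theorem~\ref{theorem-ehi} to control projectivity and syzygies in the other direction; here one uses that $F$ is essentially surjective to write any test module over $B$ as $F$ of something over $A$.

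For the Auslander--Reiten conjecture the extra ingredient is the summand $A$ in $M\oplus A$: one must also track $\Ext^i_A(M,A)$ and $\Ext^i_A(A,M)$. Suppose $B$ satisfies the Auslander--Reiten conjecture and $M\in\mod A$ has $\Ext^i_A(M,M\oplus A)=0$ for all $i\geq 1$. The vanishing $\Ext^{\geq 1}_A(M,A)=0$ says exactly $M\in{}^\perp A$; combined with $\Ext^{\geq 1}_A(M,M)=0$ one shows, via Lemma~\ref{lemma-perp} and the syzygy construction of Lemma~\ref{lemma-Gproj}, that a high syzygy $\Omega^N M$ is Gorenstein projective, so $M$ has finite Gorenstein projective dimension and $M\in D^b(A)_{fGd}$. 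Now consider $FM$, respectively $\Omega^t(FM)$: using that $F(A')\cong B$ for some $A'$ with $\pd_A A'\leq t$ (Theorem~\ref{theorem-ehi}) and the isomorphisms $\Ext^{i}_B(\Omega^t(FM),B)\cong\Ext^{i+t}_B(FM,F(A'))\cong\Ext^{i+t}_A(M,A')\cong\Ext^{i+t+\cdots}_A(M,\Omega^{t}A')=0$ in high degrees (as in the proof of Lemma~\ref{lemma-perp}), together with the analogous computation for $\Ext^i_B(\Omega^t(FM),\Omega^t(FM))$ from the previous paragraph, one gets $\Ext^{i}_B(\Omega^t(FM),\Omega^t(FM)\oplus B)=0$ for all $i\geq 1$ after absorbing syzygy shifts. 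Since $\Omega^t(FM)$ is Gorenstein projective and hence in ${}^\perp B$, the vanishing against $B$ in all positive degrees follows from vanishing in high degrees by dimension shifting; applying the Auslander--Reiten conjecture for $B$ gives $\Omega^t(FM)$ projective, and then one pulls back through the equivalence $\widetilde F:\underline{\Gproj}\,A\xrightarrow{\sim}\underline{\Gproj}\,B$ to conclude $M$ is Gorenstein projective and zero in $\underline{\Gproj}\,A$, i.e. $M$ is projective. The converse again runs symmetrically through $\widetilde{LH}$ and Theorem~\ref{theorem-ehi}.

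The main obstacle I anticipate is the bookkeeping around syzygies and degrees: $F$ only matches $\Ext$-groups above degree $t$, and replacing a module by $\Omega^t F(-)$ or $\Omega^N(-)$ shifts the relevant $\Ext$-degrees, so one has to be careful that the hypotheses $\Ext^{\geq 1}_A(M,M\oplus A)=0$ really do propagate to $\Ext^{\geq 1}_B(\Omega^t(FM),\Omega^t(FM)\oplus B)=0$ — the passage from ``vanishing in high degrees'' back to ``vanishing in all positive degrees'' must be justified by the Gorenstein projectivity (so that dimension shifting downward is reversible against $B$, and against a Gorenstein projective module). A second, more subtle point is ensuring that $F$ (resp.\ $LH$) genuinely carries the relevant module to something in its image of $\Gproj$ in a way compatible with the stable equivalence $\widetilde F$; this is exactly what Lemma~\ref{lemma-fGd}, Theorem~\ref{theorem-Gproj-equiv} and the triangle $P^\bullet_X\to LH(X)\to\overline{LH}(X)\to$ are set up to provide, so I would lean on those rather than re-deriving anything.
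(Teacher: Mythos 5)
There are two genuine gaps in your proposal, both of which stem from the same underlying issue: the $t$-eventually homological isomorphism only matches $\Ext$-groups in degrees $> t$, and you try to bridge the remaining low degrees by unjustified claims about Gorenstein projective modules, whereas the paper bridges them by invoking the already-established singular equivalence.

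The first gap is your claim that for a Gorenstein projective module $G$, vanishing of $\Ext^i_B(G,G)$ in all sufficiently high degrees forces vanishing in all degrees $\geq 1$ (``a standard fact for Gorenstein projective modules: $\Ext^{\geq 1}$ between them is periodic up to syzygy shift''). This is not a standard fact and is false in general: $\Ext^i_B(G,G)\cong\Hom_{\underline{\Gproj}B}(G,G[i])$, and nothing forces the graded endomorphism ring of an object of $\underline{\Gproj}B$ to be periodic or rigid in this sense. The cosyzygy shift $\Ext^i(G,G)\cong\Ext^{i+N}(\Omega^{-N}G,G)$ changes the first argument, not the second, so you cannot slide a high-degree vanishing of $\Ext^\bullet(G,G)$ back down to degree one. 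The correct way to get vanishing in all degrees $\geq 1$ is the route the paper takes: pass through $D_{sg}$, where by Theorem~\ref{theorem-sing-equiv} the functor $F$ (or $LH$) is a triangle equivalence, and use the identification $\Ext^i_A(X,Y)\cong\Hom_{D_{sg}(A)}(X,Y[i])$ for $X,Y\in{}^\perp A$, $i>0$, coming from the embedding $\underline{{}^\perp A}\hookrightarrow D_{sg}(A)$. That transports all positive $\Ext$-degrees at once, with no degree gap to bridge.

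The second gap, which is more serious, is in the Auslander--Reiten direction. You assert that for $M\in\mod A$ with $\Ext^{\geq 1}_A(M,M\oplus A)=0$, ``a high syzygy $\Omega^N M$ is Gorenstein projective, so $M$ has finite Gorenstein projective dimension.'' Membership in ${}^\perp A$ together with $\Ext^{\geq 1}(M,M)=0$ does not give this: Lemma~\ref{lemma-Gproj} characterizes Gorenstein projectives by a \emph{two-sided} sequence of short exact sequences with terms in ${}^\perp A$, so you need cosyzygies as well as syzygies, and $M\in{}^\perp A$ gives only the syzygy side. For a non-Gorenstein algebra, ${}^\perp A$ is genuinely larger than the syzygy-closure of $\Gproj A$, and a test module for the Auslander--Reiten conjecture need not have finite Gorenstein projective dimension. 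This is precisely why the paper works with the stable category $\underline{{}^\perp A}$ (via the commutative diagrams from \cite[Propositions 4.8 and 5.2]{HP17} for the non-negative functors $LH$ and $F[-t]$) rather than $\underline{\Gproj}A$: the stable functor $\overline{LH}$ (resp.\ $\overline{F[-t]}\cong\Omega^tF$) sends ${}^\perp B$ into ${}^\perp A$ (resp.\ the other way), the singular equivalence identifies the relevant $\Hom$-groups in all positive degrees, and projectivity is then recovered from $\overline{LH}(M)\cong 0$ in $D_{sg}$ plus $M\in{}^\perp$ — with no appeal to Gorenstein projective dimension at all. Your treatment of the Gorenstein projective conjecture can be salvaged by replacing the ``periodicity'' step with the singular-equivalence identification, but the Auslander--Reiten argument needs to be restructured along the lines of the paper's use of $\underline{{}^\perp}$.
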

\begin{proof}
By the proof of Theorem~\ref{theorem-Gproj-equiv}, the functor $LH$ restricts to
a non-negative functor from $\mathcal{D}^b(B)$ to $\mathcal{D}^b(A)$.
Moreover, $LH$ admits a right adjoint $F$ which preserves $K^b(\proj)$.
Therefore, it follows from \cite[Proposition 4.8 and Proposition 5.2]{HP17} that
there are two commutative diagrams
$$\xymatrix@!=1pc{ \underline{^\bot B} \ar @{^{(}->}[d]
\ar[rr]^{\overline{LH}} &&\underline{^\bot A} \ar @{^{(}->}[d] \\
D_{sg}(B)
\ar[rr]^{LH} &&D_{sg}(A)
} \begin{array}{c}
\\  \\ \mbox{and} \\ \end{array}
\xymatrix@!=1pc{ \underline{\Gproj} B \ar @{^{(}->}[d]
\ar[rr]^{\overline{LH}} &&\underline{\Gproj} A \ar @{^{(}->}[d] \\
D_{sg}(B)
\ar[rr]^{LH} &&D_{sg}(A).
}
$$
It is clear that the embedding $\underline{^\bot A} \hookrightarrow D_{sg}(A)$
induces an isomorphism $$\Ext _A^i(X,Y)\cong \Hom _{D_{sg}(A)}(X,Y[i])$$
for each $X, Y \in {}^{\perp}A$ and for each $i > 0$.
Moreover, it follows from Theorem~\ref{theorem-sing-equiv} that $LH: D_{sg}(B) \rightarrow D_{sg}(A)$
is an equivalence.
Hence, using the same judgment as \cite[Lemma 3.3]{CHQW20},
we can prove that the Auslander-Reiten conjecture
(resp. Gorenstein projective conjecture) holds for $B$ if it holds for $A$.

Note that the functor $F: \mod A\rightarrow \mod B$ is exact, and $\pd _BF(P)\leq t$
for any $P\in \proj A$. Then the functor $F[-t]: \mathcal{D}^b(A) \rightarrow \mathcal{D}^b(B)$
is non-negative and the stable functor $\overline{F[-t]}$ is isomorphic to
$\Omega ^tF$.
Combining \cite[Proposition 4.8]{HP17} with Lemma~\ref{lemma-perp} and Lemma~\ref{lemma-fGd},
we have two commutative diagrams
$$\xymatrix@!=1pc{ \underline{^\bot A} \ar @{^{(}->}[d]
\ar[rr]^{\overline{F[-t]}} &&\underline{^\bot B} \ar @{^{(}->}[d] \\
D_{sg}(A)
\ar[rr]^{F[-t]} &&D_{sg}(B)
} \begin{array}{c}
\\  \\ \mbox{and} \\ \end{array}
\xymatrix@!=1pc{ \underline{\Gproj} A \ar @{^{(}->}[d]
\ar[rr]^{\overline{F[-t]}} &&\underline{\Gproj} B \ar @{^{(}->}[d] \\
D_{sg}(A)
\ar[rr]^{F[-t]} &&D_{sg}(B).
}
$$
By Theorem~\ref{theorem-sing-equiv}, the functor $F: D_{sg}(A) \rightarrow D_{sg}(B)$
is an equivalence, and so is $F[-t]: D_{sg}(A) \rightarrow D_{sg}(B)$. Therefore,
the Auslander-Reiten conjecture
(resp. Gorenstein projective conjecture) holds for $B$ implies that it holds for $A$.

\end{proof}

\section{Applications and examples}
\indent\indent
In this section, we will apply our main results to arrow removal and vertex removal.
This will produce new reduction techniques
for the study of Gorenstein defect categories, Gorenstein
symmetry conjecture, Auslander-Reiten conjecture and
Gorenstein projective conjecture.

Let $A$ be an admissible quotient $kQ/I$ of a path algebra $kQ$ over a field $k$.
Choose an arrow $\alpha$ in $Q$ which does not occur in a minimal generating set
of $I$ and define $B =A/\langle \overline{\alpha} \rangle$. The arrow removal operation, transferring homological properties
of $A$ to $B$,
was investigated in \cite{EPS21, GPS18} with respect to finitistic dimension,
Gorensteinness, singularity categories
and the Fg condition. Now we will consider other
homological invariants under this operation.

\begin{corollary}\label{cor-arrow}
Keep the above notations and assumptions.
Then
$ \underline{\Gproj }A\cong \underline{\Gproj }B$ and
$D_{def}(A)\cong D_{def}(B).$ Moreover, $A$ satisfies the Gorenstein
symmetry conjecture (resp. Auslander-Reiten conjecture,
Gorenstein projective conjecture) if and only if so does $B$.
\end{corollary}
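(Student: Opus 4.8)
The plan is to realize the arrow removal by a functor $F\colon\mod A\to\mod B$ meeting the hypotheses of Theorem~\ref{theorem-GSC}, Theorem~\ref{theorem-Gproj-equiv} and Theorem~\ref{theorem-ARC} --- an essentially surjective eventually homological isomorphism admitting a left and a right adjoint --- and then to quote those three theorems. The structural facts needed all follow from the hypothesis on $\alpha\colon i\to j$. Choosing a minimal generating set of $I$ none of whose elements involves a path through $\alpha$ (possible by assumption), the assignments $e_v\mapsto e_v$ and $\overline{\beta}\mapsto\overline{\beta}$ for $\beta\neq\alpha$ define an algebra section $\sigma\colon B\to A$ of the canonical surjection $\pi\colon A\to B$. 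Moreover, the hypothesis gives the identification $\langle\overline{\alpha}\rangle\cong Ae_j\otimes_k e_iA$ of the ideal generated by a non-relation arrow (cf.~\cite{GPS18,EPS21}); a short dimension count then forces $e_iAe_j=0$, so $\overline{\alpha}A\overline{\alpha}=0$ and $\langle\overline{\alpha}\rangle^2=0$, and $A$ is the trivial extension $B\ltimes M$ of $B$ by the $B$-bimodule $M=Be_j\otimes_k e_iB$, with $\sigma$ the canonical inclusion; note that $M$ is projective on each side over $B$.

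I would then take $F:=\mathrm{res}_\sigma\colon\mod A\to\mod B$, restriction of scalars along $\sigma$ (equivalently, forgetting the $M$-action). It is exact; it is essentially surjective, since $F\circ\mathrm{res}_\pi=\id_{\mod B}$ for the inflation functor $\mathrm{res}_\pi\colon\mod B\to\mod A$; and, being restriction along an algebra homomorphism, it automatically admits a left adjoint $A\otimes_B-$ (induction along $\sigma$) and a right adjoint $\Hom_B(A,-)$ (coinduction along $\sigma$). The remaining requirement, that $F$ be an eventually homological isomorphism, is exactly the arrow-removal theorem of Erdmann-Psaroudakis-Solberg \cite{EPS21}. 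I would also indicate the mechanism: since $M$ is projective on each side over $B$, the functor $F$ carries projective $A$-modules to projective $B$-modules, so for a projective resolution $P_\bullet\to X$ over $A$ the (still exact) complex $F(P_\bullet)\to F(X)$ is a projective resolution over $B$; the coinduction adjunction then yields $\Ext^n_B(FX,FY)\cong\Ext^n_A(X,\Hom_B(A,FY))$ for all $n\geq 0$, and comparing $\Hom_B(A,FY)$ with $Y$ in high cohomological degrees via the unit $Y\to\Hom_B(A,FY)$ is the estimate carried out in \cite{EPS21}.

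Granting this, Theorem~\ref{theorem-GSC} gives that $A$ satisfies the Gorenstein symmetry conjecture if and only if $B$ does; Theorem~\ref{theorem-Gproj-equiv} gives the triangle equivalences $\underline{\Gproj} A\cong\underline{\Gproj} B$ and $D_{def}(A)\cong D_{def}(B)$; and Theorem~\ref{theorem-ARC} gives the corresponding equivalence for the Auslander-Reiten conjecture and for the Gorenstein projective conjecture, which is the entire assertion. The main obstacle, I expect, is that a single functor must carry all of these properties at once: essential surjectivity and the two adjoints are immediate from the trivial-extension presentation, but one must resist using $B\otimes_A-$, which likewise realizes the arrow removal yet is not exact (its derived functor $\Tor^A_1(B,-)$ is nonzero, already for the path algebra of $1\to 2$) and hence has no left adjoint; the restriction functor $\mathrm{res}_\sigma$ is the correct choice. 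Should \cite{EPS21} state the eventually-homological-isomorphism property for the inflation $\mathrm{res}_\pi\colon\mod B\to\mod A$ rather than for $F$, the only extra bookkeeping is the transfer to $F=\mathrm{res}_\sigma$, using that $\mathrm{res}_\pi$ is a section of $F$ together with the Shapiro-type identity above.
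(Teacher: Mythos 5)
Your proof is correct and takes essentially the same route as the paper's: both verify that the arrow-removal functor $\mod A\to\mod B$ is an essentially surjective eventually homological isomorphism admitting left and right adjoints, and then invoke Theorem~\ref{theorem-GSC}, Theorem~\ref{theorem-Gproj-equiv} and Theorem~\ref{theorem-ARC}. The paper simply cites \cite[Proposition 4.6]{GPS18} for the functor $e$ (which is exactly your $\mathrm{res}_\sigma$ coming from the trivial-extension presentation $A\cong B\ltimes M$) and \cite[Corollary 3.3]{EPS21} for the eventually-homological-isomorphism property, whereas you unpack those black boxes, so the extra content is re-derivation rather than a genuinely different argument.
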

\begin{proof}
By \cite[Proposition 4.6]{GPS18}, there is a functor
$e: \mod A\rightarrow
\mod B$ which is essentially surjective and admits a left adjoint
and a right adjoint.
Further, it follows from \cite[Corollary 3.3]{EPS21} that $e: \mod A\rightarrow
\mod B$ is an eventually homological isomorphism. Now this
corollary follows from Theorem~\ref{theorem-GSC}, Theorem~\ref{theorem-Gproj-equiv} and Theorem~\ref{theorem-ARC}.
\end{proof}

Let A be an algebra and $e$ be an idempotent in $A$.
In \cite{PSS14}, the author proved that $eA\otimes _A-: \mod A\rightarrow
\mod eAe$ is an eventually homological isomorphism
if and only if $\pd _{eAe}eA< \infty $ and $\id _A(\frac {A/AeA}{\rad (A/AeA)})<\infty$ (or equivalently,
$\pd _{(eAe)^{op}}Ae< \infty $ and $\pd _A(\frac {A/AeA}{\rad (A/AeA)})<\infty$), and
they compared the algebras $A$ and $eAe$ with respect to Gorensteinness, singularity categories
and the Fg condition under these conditions. Now
we will investigate more homological invariants between $A$ and $eAe$.

\begin{corollary}\label{cor-idem}
Assume that
$\pd _{eAe}eA< \infty $ and $\id _A(\frac {A/AeA}{\rad (A/AeA)})<\infty$
(or equivalently, $\pd _{(eAe)^{op}}Ae< \infty $ and $\pd _A(\frac {A/AeA}{\rad (A/AeA)})<\infty$).
Then the functor $eA\otimes _A-: \mod A\rightarrow
\mod eAe$ induces triangle equivalences
$ \underline{\Gproj }A\cong \underline{\Gproj }(eAe)$ and
$D_{def}(A)\cong D_{def}(eAe).$ Moreover, $A$ satisfies the Gorenstein
symmetry conjecture (resp. Auslander-Reiten conjecture,
Gorenstein projective conjecture) if and only if so does $eAe$ .
\end{corollary}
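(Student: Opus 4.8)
The plan is to verify that the functor $F:=eA\otimes _A-\colon \mod A\rightarrow \mod eAe$ fulfils every hypothesis of Theorem~\ref{theorem-GSC}, Theorem~\ref{theorem-Gproj-equiv} and Theorem~\ref{theorem-ARC} under the stated conditions, and then to read off the three assertions of the corollary directly from those theorems. There is no genuine obstacle here; the homological heavy lifting sits in \cite{PSS14} and in Theorem I, so the work is a bookkeeping check, the only delicate point being to keep the left/right module conventions straight when exhibiting the two adjoints and the essential surjectivity.

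First I would record the structural properties of $F$ that hold with no hypothesis on $e$. Since $A=eA\oplus (1-e)A$ as right $A$-modules, the bimodule ${}_{eAe}eA_A$ is projective over $A^{\op}$, so $F$ is exact; moreover $Ae$ is finitely generated projective as a left $A$-module and $\Hom_A(Ae,A)\cong eA$ as $(eAe,A)$-bimodules, which gives a natural isomorphism $F\cong \Hom_A(Ae,-)$. From these two descriptions the standard tensor--hom adjunctions produce a left adjoint $H:=Ae\otimes _{eAe}-\colon \mod eAe\rightarrow \mod A$ and a right adjoint $G:=\Hom_{eAe}(eA,-)\colon \mod eAe\rightarrow \mod A$ of $F$, via $\Hom_A(Ae\otimes_{eAe}N,M)\cong \Hom_{eAe}(N,\Hom_A(Ae,M))\cong \Hom_{eAe}(N,FM)$ and $\Hom_{eAe}(eA\otimes_A M,N)\cong \Hom_A(M,\Hom_{eAe}(eA,N))$; all the modules occurring are finite dimensional over $k$, hence finitely generated, so $H$ and $G$ genuinely land in $\mod$. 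Next I would observe that $F$ is essentially surjective: using $eA\otimes _A Ae\cong eAe$ as $(eAe,eAe)$-bimodules, for any $N\in \mod eAe$ we get $F(HN)=eA\otimes _A Ae\otimes _{eAe}N\cong eAe\otimes _{eAe}N\cong N$.

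It remains only to invoke the hypothesis. By the Main Theorem of \cite{PSS14}, the functor $eA\otimes _A-$ is an eventually homological isomorphism if and only if $\pd _{eAe}eA<\infty$ and $\id _A(\frac{A/AeA}{\rad(A/AeA)})<\infty$, equivalently $\pd _{(eAe)^{\op}}Ae<\infty$ and $\pd _A(\frac{A/AeA}{\rad(A/AeA)})<\infty$, which is precisely what is assumed. Thus $F$ is an essentially surjective eventually homological isomorphism admitting a left adjoint $H$ and a right adjoint $G$, so Theorem~\ref{theorem-GSC} yields the Gorenstein symmetry statement, Theorem~\ref{theorem-Gproj-equiv} yields the triangle equivalences $\underline{\Gproj }A\cong \underline{\Gproj }(eAe)$ and $D_{def}(A)\cong D_{def}(eAe)$, and Theorem~\ref{theorem-ARC} yields the Auslander--Reiten and Gorenstein projective statements. (As noted in the introduction, the fact that $eA\otimes_A-$ and $Ae\otimes_{eAe}-$ preserve modules of finite Gorenstein projective dimension is then automatic, being built into the proof of Theorem~\ref{theorem-Gproj-equiv} through the statement that $LH$ and $F$ both restrict to $K^b(\proj)$ and $\mathcal{D}^b(\mod)$.)
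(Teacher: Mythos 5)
Your proof is correct and follows essentially the same route as the paper: both derive the result by checking that $eA\otimes_A-$ is an essentially surjective eventually homological isomorphism with left and right adjoints, and then invoking Theorems~\ref{theorem-GSC}, \ref{theorem-Gproj-equiv} and~\ref{theorem-ARC}. The only difference is cosmetic: the paper obtains essential surjectivity and the two adjoints in one stroke by citing the standard recollement $(\mod A/AeA,\, \mod A,\, \mod eAe)$ induced by the idempotent $e$, whereas you exhibit $H=Ae\otimes_{eAe}-$ and $G=\Hom_{eAe}(eA,-)$ explicitly and verify $F(HN)\cong N$ by hand via $eA\otimes_A Ae\cong eAe$ --- a perfectly acceptable unpacking of what the recollement gives for free.
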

\begin{proof}
Clearly, any idempotent element $e$ induces a recollement
between $\mod A/AeA$, $\mod A$ and $\mod eRe$. Therefore,
the functor $eA\otimes _A-: \mod A\rightarrow
\mod eAe$ is essentially surjective, and it admits a left adjoint
and a right adjoint.
Further, it follows from \cite[Main theorem]{PSS14} that $eA\otimes _A-$ is an eventually homological isomorphism. Now this
corollary follows from Theorem~\ref{theorem-GSC}, Theorem~\ref{theorem-Gproj-equiv} and Theorem~\ref{theorem-ARC}.
\end{proof}

Using \cite[Lemma 8.11, Lemma 8.9 and Proposition 8.7 ]{PSS14}, we have the following special
case of Corollary~\ref{cor-idem}.
\begin{corollary}\label{cor-vertex}
Let $A = kQ/I$ be a quotient of a path algebra $kQ$
over a field $k$. Choose
some vertices in $Q$ where no relations start and no relations end, and let $e$ be the
sum of idempotents corresponding to all vertices except these. Then
$ \underline{\Gproj }A\cong \underline{\Gproj }(eAe)$ and
$D_{def}(A)\cong D_{def}(eAe).$ Moreover, $A$ satisfies the Gorenstein
symmetry conjecture (resp. Auslander-Reiten conjecture,
Gorenstein projective conjecture) if and only if so does $eAe$ .
\end{corollary}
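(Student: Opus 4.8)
The plan is to deduce this corollary directly from Corollary~\ref{cor-idem} by checking that the idempotent $e$ in the statement satisfies the homological hypotheses of that corollary. Write $f=1-e$ for the sum of the idempotents attached to the chosen vertices, i.e. those vertices of $Q$ at which no relation of a minimal generating set of $I$ starts or ends. The first step is to describe $A/AeA$. Passing to $A/AeA$ kills the idempotents $e_i$ for the non-chosen vertices and hence every path through them; since a minimal relation has a common source and target, any relation surviving in the quotient would start (and end) at a chosen vertex, contradicting our choice. Hence $A/AeA$ is the path algebra $kQ'$ of the full subquiver $Q'$ of $Q$ on the chosen vertices, and $Q'$ is acyclic because otherwise $A$ would contain an infinite-dimensional subalgebra $k[\text{cycle}]$; in particular $A/AeA=kQ'$ is hereditary and its top $\frac{A/AeA}{\rad(A/AeA)}$ is the direct sum of the simple $A$-modules supported at the chosen vertices.

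The second, and only substantive, step is to verify the hypotheses of Corollary~\ref{cor-idem}, namely $\pd_{eAe}eA<\infty$ together with $\id_A\big(\frac{A/AeA}{\rad(A/AeA)}\big)<\infty$, or equivalently $\pd_{(eAe)^{\op}}Ae<\infty$ together with $\pd_A\big(\frac{A/AeA}{\rad(A/AeA)}\big)<\infty$. This is precisely what the structural results \cite[Lemma 8.9, Lemma 8.11, Proposition 8.7]{PSS14} supply in the bound-quiver setting: the absence of relations ending (resp. starting) at a chosen vertex forces the relevant one-sided projective (resp. injective) dimensions over $A$ and over $eAe$ to be finite, and Proposition 8.7 of \cite{PSS14} assembles these finiteness statements into exactly one of the two equivalent packages of hypotheses required by Corollary~\ref{cor-idem}. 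I would cite these three results for the dimension estimates rather than reprove them.

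With those hypotheses in hand, Corollary~\ref{cor-idem} applies verbatim to $e$ and yields the triangle equivalences $\underline{\Gproj}A\cong\underline{\Gproj}(eAe)$ and $D_{def}(A)\cong D_{def}(eAe)$, as well as the asserted equivalences of the Gorenstein symmetry, Auslander--Reiten, and Gorenstein projective conjectures for $A$ and $eAe$. I do not expect a genuine obstacle here: the argument is a formal consequence of Corollary~\ref{cor-idem} once the cited lemmas are invoked, and the only care needed is in matching the left/right-module conventions of \cite{PSS14} with ours and in recording which of the two equivalent hypothesis packages is the one actually delivered by \cite[Lemma 8.9, Lemma 8.11, Proposition 8.7]{PSS14}.
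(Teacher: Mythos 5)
Your proposal is correct and follows exactly the paper's route: the paper itself derives Corollary~\ref{cor-vertex} as a one-line special case of Corollary~\ref{cor-idem} by invoking the same three results \cite[Lemma 8.9, Lemma 8.11, Proposition 8.7]{PSS14} to supply the finiteness hypotheses. Your extra structural remarks on $A/AeA$ are a helpful gloss but not strictly needed once those lemmas are cited.
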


Let $A$ and $B$ be algebras, $M$ an $A$-$B$-bimodule and $T =
\left[\begin{array}{cc} A & M \\ 0 & B  \end{array}\right] $.
Let $e_A= \left[\begin{array}{cc} 1_A & 0 \\ 0 & 0  \end{array}\right]$ and
$e_B= \left[\begin{array}{cc} 0 & 0 \\ 0 & 1_B  \end{array}\right] $. Denote by
$S_{e_A}=e_AT\otimes _T-: \mod T\rightarrow
\mod e_ATe_A$ and $S_{e_B}=e_BT\otimes _T-: \mod T\rightarrow
\mod e_BTe_B$. From \cite{Z13}, $_AM_B$ is {\it compatible} if $M\otimes _B-$
sends every acyclic complex of projective $B$-modules
to acyclic complex, and $\Ext _A^i(G,M)\cong 0$ for any $G\in \Gproj A$
and $i>0$.
Assume that $_AM_B$ is compatible, then $S_{e_A}$
induces triangle equivalences $D_{sg}(T)\cong D_{sg}(A)$, $D_{def}(T)\cong D_{def}(A)$
and $\underline{\Gproj }T\cong \underline{\Gproj }A$
if and only if $\gl B<\infty$, $\pd _AM <\infty$ and $M\otimes _B-$ preserve modules of
finite Gorenstein projective dimension, see \cite[Theorem 4.4 (2)]{LHZ22}.
Now we will use Corollary~\ref{cor-idem} to simplify these conditions.

\begin{corollary}\label{cor-tri-alg-A}{\rm (Compare \cite[Theorem 4.4 (2)]{LHZ22})}
Let  $T =
\left[\begin{array}{cc} A & M \\ 0 & B  \end{array}\right] $
be a triangular matrix algebra. Then $S_{e_A}$ induces
triangle equivalences $D_{sg}(T)\cong D_{sg}(A)$, $D_{def}(T)\cong D_{def}(A)$
and $\underline{\Gproj }T\cong \underline{\Gproj }A$
if and only if $\gl B<\infty$ and $\pd _AM <\infty$.
\end{corollary}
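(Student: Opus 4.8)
The plan is to deduce this corollary from Corollary~\ref{cor-idem} by choosing $A$ itself to be the idempotent corner of $T$. Concretely, I would take $e = e_A$ in the triangular matrix algebra $T$, so that $e_ATe_A \cong A$ and the functor $S_{e_A} = e_AT\otimes_T-$ is exactly the functor $eA\otimes_A-$ appearing in Corollary~\ref{cor-idem} (with $A$ there replaced by $T$ and $eAe$ there replaced by $A$). One computes directly that $T/Te_AT \cong B$, so the relevant finiteness conditions become conditions on $B$ and on $M$. The main step is therefore to translate the two (equivalent) hypotheses of Corollary~\ref{cor-idem}, namely $\pd_{e_ATe_A}e_AT<\infty$ together with $\id_T\bigl(\tfrac{T/Te_AT}{\rad(T/Te_AT)}\bigr)<\infty$, into the stated conditions $\gl B<\infty$ and $\pd_AM<\infty$.

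For the translation I would argue as follows. Using the standard description of modules over a triangular matrix algebra, $e_AT$ is, as a right $e_ATe_A\cong A$-module, the bimodule $\left[\begin{array}{cc} A & M \end{array}\right]$, whose $A$-module structure on the first component is free and on the second is that of $M$; hence $\pd_A e_AT<\infty$ if and only if $\pd_A M<\infty$. On the other side, $T/Te_AT\cong B$, and a $T$-module annihilated by $Te_AT$ is the same thing as a $B$-module (placed in the bottom corner with zero top component and zero structure map); computing an injective coresolution of such a module over $T$ and using the adjunctions coming from the recollement $(\mod B, \mod T, \mod A)$, one sees that $\id_T(B/\rad B)<\infty$ forces $\gl B<\infty$, and conversely $\gl B<\infty$ makes every simple $B$-module have finite injective dimension over $T$. (Alternatively, one invokes the equivalent form of the hypothesis in Corollary~\ref{cor-idem}, $\pd_T(T/Te_AT / \rad(\cdot))<\infty$, which is visibly equivalent to $\gl B<\infty$ by the same recollement bookkeeping, and this is the cleaner route.) Together these give the claimed equivalence of hypotheses.

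Once the hypotheses match, Corollary~\ref{cor-idem} immediately yields the triangle equivalences $\underline{\Gproj}\,T\cong\underline{\Gproj}\,(e_ATe_A)=\underline{\Gproj}\,A$ and $D_{def}(T)\cong D_{def}(A)$; the singular equivalence $D_{sg}(T)\cong D_{sg}(A)$ comes from Theorem~\ref{theorem-sing-equiv} (or, as remarked after Corollary~II, from \cite{Chen09, PSS14}). For the converse direction of the "if and only if", I would observe that the three equivalences in particular force $D_{def}(T)\cong D_{def}(A)$ and a singular equivalence; comparing with \cite[Theorem 4.4(2)]{LHZ22}, whose necessity part already establishes that such equivalences entail $\gl B<\infty$ and $\pd_A M<\infty$, closes the argument. (Alternatively one can argue necessity directly: a singular equivalence together with $\underline{\Gproj}$ equivalence pins down the relevant finiteness of the "removed" data $B$ and $M$.)

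The main obstacle I anticipate is purely bookkeeping rather than conceptual: carefully identifying the right $e_ATe_A$-module $e_AT$ and the bimodule structures so that "$\pd_{e_ATe_A}e_AT<\infty$" becomes exactly "$\pd_A M<\infty$", and making sure that the two equivalent forms of the hypothesis in Corollary~\ref{cor-idem} are each matched with $\gl B<\infty$ without an off-by-one or a side-swap between projective and injective dimensions. Using the equivalent form involving $\pd_T(T/Te_AT/\rad(\cdot))$ rather than the injective-dimension form should keep this clean, since $T/Te_AT\cong B$ and a $B$-module has finite projective dimension over $T$ precisely when it does over $B$, whence finiteness for the semisimple quotient is equivalent to $\gl B<\infty$.
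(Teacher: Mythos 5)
Your overall strategy coincides with the paper's: specialize Corollary~\ref{cor-idem} to the idempotent $e=e_A$ in $T$, identify $e_ATe_A\cong A$ and $T/Te_AT\cong B$, and for the converse appeal to the necessity part of \cite[Theorem 4.4(2)]{LHZ22}. The difference is in the middle step: the paper simply cites \cite[Lemma 8.15]{PSS14} to pass between the pair $(\gl B<\infty,\ \pd_AM<\infty)$ and the hypotheses of Corollary~\ref{cor-idem}, whereas you attempt to verify this translation by hand.

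Your preferred "cleaner route" contains a genuine error. You claim that the condition $\pd_T\bigl(\tfrac{T/Te_AT}{\rad(T/Te_AT)}\bigr)<\infty$ is "visibly equivalent to $\gl B<\infty$," on the grounds that "a $B$-module has finite projective dimension over $T$ precisely when it does over $B$." That statement is false. Take $A=k[x]/(x^2)$, $B=k$, and $M=A/(x)\cong k$ with trivial right $B$-action, so $T$ is the quiver algebra with a loop $x$ at vertex~$1$, an arrow $\alpha\colon 2\to 1$, and relations $x^2=0$, $x\alpha=0$. Here $\gl B=0$, so the simple $B$-module $S_2$ is projective over $B$; but the projective cover of $S_2$ over $T$ is $Te_2=(M,B,\mathrm{id})$ and $\Omega^1_T(S_2)\cong(M,0,0)=(S_1,0,0)$, whose projective $T$-resolution is the projective $A$-resolution of $S_1$, which is unbounded. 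Hence $\pd_T S_2=\infty$ although $\pd_B S_2=0$. The reason the two do not match is exactly that the other condition in form (b) of Corollary~\ref{cor-idem}, namely $\pd_{(e_ATe_A)^{\mathrm{op}}}Te_A<\infty$, is automatic for triangular matrix algebras (since $Te_A\cong A$ as a right $A$-module, and so is free); consequently $\pd_T\bigl(\tfrac{T/Te_AT}{\rad(T/Te_AT)}\bigr)<\infty$ must encode \emph{both} $\gl B<\infty$ and $\pd_AM<\infty$, not $\gl B<\infty$ alone. Your first route via $\id_T$ is the one that can be made to work (and your identification $\pd_{e_ATe_A}e_AT<\infty\Leftrightarrow\pd_AM<\infty$ from $e_AT\cong A\oplus M$ as a left $A$-module is correct), but it is not mere bookkeeping; this is what \cite[Lemma 8.15]{PSS14} supplies, and citing it is the intended step. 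Finally, note the paper explicitly remarks that the compatibility hypothesis on $M$ is not used in the necessity part of \cite[Theorem 4.4(2)]{LHZ22}; since that theorem is stated with a compatibility assumption, your converse argument should make the same remark explicit rather than relying on a parenthetical fallback.
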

\begin{proof}
Assume that $\gl B<\infty$ and $\pd _AM <\infty$. Then
it follows from \cite[Lemma 8.15]{PSS14} that $\pd _{e_ATe_A}e_AT< \infty $
and $\id _A(\frac {T/Te_AT}{\rad (T/Te_AT)})<\infty$.
According to \cite[Main theorem]{PSS14}, $S_{e_A}$ induces
a triangle equivalence $D_{sg}(T)\cong D_{sg}(A)$,
and by Corollary~\ref{cor-idem}, $S_{e_A}$ induces equivalences
$ \underline{\Gproj }T\cong \underline{\Gproj }A$ and
$D_{def}(T)\cong D_{def}(A).$ Conversely, assume $S_{e_A}$ induces
such triangle equivalences. Then it follows from \cite[Theorem 4.4 (2)]{LHZ22}
that $\gl B<\infty$ and $\pd _AM <\infty$.
Indeed, the compatibility of $M$ is not used
in the only if part of \cite[Theorem 4.4 (2)]{LHZ22}.
\end{proof}

Similarly, assume that $_AM_B$ is compatible. Then $S_{e_B}$
induces triangle equivalences $D_{sg}(T)\cong D_{sg}(B)$, $D_{def}(T)\cong D_{def}(B)$
and $\underline{\Gproj }T\cong \underline{\Gproj }B$
if and only if $\gl A<\infty$, see \cite[Theorem 4.6 (2)]{LHZ22}.
Now we will give a sufficient conditions without the compatibility of $M$.

\begin{corollary}\label{cor-tri-alg-B}{\rm (Compare \cite[Theorem 4.6 (2)]{LHZ22})}
Let  $T =
\left[\begin{array}{cc} A & M \\ 0 & B  \end{array}\right] $
be a triangular matrix algebra. Assume that $\gl A<\infty$ and $\pd _{B^{op}}M<\infty$,
then
$S_{e_B}$ induces
triangle equivalences $D_{sg}(T)\cong D_{sg}(B)$, $D_{def}(T)\cong D_{def}(B)$
and $\underline{\Gproj }T\cong \underline{\Gproj }B$.
\end{corollary}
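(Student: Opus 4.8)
The plan is to deduce Corollary~\ref{cor-tri-alg-B} from Corollary~\ref{cor-idem}, applied to the algebra $T$ and the idempotent $e_B$, in parallel with the proof of Corollary~\ref{cor-tri-alg-A}. Since $e_BTe_B\cong B$, the functor $S_{e_B}=e_BT\otimes_T-$ is, under this identification, exactly the functor $eA\otimes_A-$ of Corollary~\ref{cor-idem} for the pair $(T,e_B)$, and it is essentially surjective and admits a left and a right adjoint, coming from the canonical recollement $(\mod T/Te_BT,\mod T,\mod e_BTe_B)$. Hence it suffices to verify one of the two equivalent lists of hypotheses of Corollary~\ref{cor-idem}; I will check the second one, namely $\pd_{(e_BTe_B)^{\op}}Te_B<\infty$ and $\pd_T\bigl(\frac{T/Te_BT}{\rad(T/Te_BT)}\bigr)<\infty$.

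For the first inequality, a direct $2\times 2$ matrix computation gives $Te_B\cong M\oplus B$ as a right $e_BTe_B\cong B$-module, so $\pd_{(e_BTe_B)^{\op}}Te_B=\pd_{B^{\op}}M<\infty$ by assumption. For the second, one computes $Te_BT=\left[\begin{array}{cc}0&M\\0&B\end{array}\right]$, hence $T/Te_BT\cong A$ as algebras, and pulling back along $T\twoheadrightarrow T/Te_BT\cong A$ the semisimple module $\frac{T/Te_BT}{\rad(T/Te_BT)}$ becomes $A/\rad A$, viewed as a $T$-module supported on the vertices of $A$. Using the standard description of modules over a triangular matrix algebra, the exact fully faithful embedding $X\mapsto(X,0,0)$ of $\mod A$ into $\mod T$ sends projectives to projectives and carries a minimal projective resolution over $A$ to a minimal projective resolution over $T$; therefore $\pd_T(S,0,0)=\pd_A S$ for every simple $A$-module $S$, and taking the supremum yields $\pd_T\bigl(\frac{T/Te_BT}{\rad(T/Te_BT)}\bigr)=\gl A<\infty$. (This is the $e_B$-analogue of \cite[Lemma~8.15]{PSS14}.)

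With both inequalities established, Corollary~\ref{cor-idem} applies to $(T,e_B)$ and produces triangle equivalences $\underline{\Gproj}T\cong\underline{\Gproj}(e_BTe_B)\cong\underline{\Gproj}B$ and $D_{def}(T)\cong D_{def}(e_BTe_B)\cong D_{def}(B)$ induced by $S_{e_B}$; moreover these conditions make $S_{e_B}$ an eventually homological isomorphism by \cite[Main theorem]{PSS14}, so Theorem~\ref{theorem-sing-equiv} (or \cite[Main theorem]{PSS14} directly) also gives the singular equivalence $D_{sg}(T)\cong D_{sg}(B)$, completing the proof. Note that, unlike in \cite[Theorem~4.6~(2)]{LHZ22}, no compatibility hypothesis on $M$ is used. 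The only mildly delicate step is the bookkeeping in the second paragraph --- pinning down the one-sided module structures on the matrix blocks and verifying $\pd_T(S,0,0)=\pd_A S$ --- but this is routine for triangular matrix algebras, so I anticipate no genuine obstacle; the real content is just the reduction to Corollary~\ref{cor-idem} together with Theorem~\ref{theorem-sing-equiv}.
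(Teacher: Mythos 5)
Your proposal is correct and follows essentially the same route as the paper: reduce to Corollary~\ref{cor-idem} for the pair $(T,e_B)$ and invoke the recollement for essential surjectivity and the adjoints. The only difference is that where the paper simply cites \cite[Lemma~8.16]{PSS14} to obtain the required finiteness conditions, you verify the equivalent pair $\pd_{(e_BTe_B)^{\op}}Te_B<\infty$ and $\pd_T\bigl(\tfrac{T/Te_BT}{\rad(T/Te_BT)}\bigr)<\infty$ by hand via the matrix computations $Te_B\cong M\oplus B$ over $B^{\op}$ and $T/Te_BT\cong A$, which is a correct and slightly more self-contained presentation of the same argument.
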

\begin{proof}
Assume that $\gl A<\infty$ and $\pd _{B^{op}}M<\infty$. Then
it follows from \cite[Lemma 8.16]{PSS14} that $\pd _{e_BTe_B}e_BT< \infty $
and $\id _B(\frac {T/Te_BT}{\rad (T/Te_BT)})<\infty$.
Therefore, the statement follows from
\cite[Main theorem]{PSS14} and Corollary~\ref{cor-idem}.
\end{proof}

Now we will illustrate our results by three examples. In particular, the Gorenstein projective
modules of some non-monomial
algebras are described.

\begin{example}
{\rm Let $A$ be the $k$-algebra given by the following quiver
$$\xymatrix{ & 2 \ar[rd]^\beta   &  & \\
1  \ar@<+0.5ex>[ur]^\alpha \ar@<-0.5ex>[ur]_\eta \ar[rd]_\gamma &   &4 \ar@(ur,dr)^\varepsilon & \\
 & 3  \ar[ur]_\delta  &  &
}$$
with relations
$\{ \varepsilon ^2, \beta \eta, \beta \alpha -\delta \gamma\}$. We write the concatenation
of paths from right to left. Clearly, there is no relation starting and ending
at the vertices $2$ and $3$, and then it follows from Corollary~\ref{cor-vertex} that
$ \underline{\Gproj }A\cong \underline{\Gproj }B$ and
$D_{def}(A)\cong D_{def}(B)$, where $B$ is the algebra
$$\xymatrix{1 \ar[r]^\lambda & 4 \ar@(ur,dr)^\varepsilon}$$
with relations $\{\varepsilon ^2\}$.
Using Corollary~\ref{cor-vertex} again, we have that $\underline{\Gproj }B\cong \underline{\Gproj }C$ and
$D_{def}(B)\cong D_{def}(C)$, where $C$ is the algebra
$$\xymatrix{4 \ar@(ur,dr)^\varepsilon}$$
with relations $\{\varepsilon ^2\}$.
Since $C$ is selfinjective, we get $\underline{\Gproj }C\cong \underline{\mod }C\cong
\mod k$ and $D_{def}(C)\cong 0$. Above all, we conclude that
$\underline{\Gproj }A\cong
\mod k$ and $D_{def}(A)\cong 0$.
It is easy to check that the simple $A$-module corresponding to $4$
is Gorenstein projective, and then
all Gorenstein projective modules over $A$ are this simple module and projective modules.
}
\end{example}

\begin{example}
{\rm  This is Example 1 from \cite{Xi04} and Example 6.5 from \cite{GPS18}.
Let $A$ be the $k$-algebra given by the following quiver
$$\xymatrix{ & 1 \ar@<+0.5ex>[ld]^\beta \ar@<-0.5ex>[ld]_\gamma \ar[rd]^\eta &  &  & \\
2 \ar@(ul,dl)_\alpha \ar[rd]_\delta &  & 3 \ar[ld]^\xi & 5 \ar[l]& \\
 & 4 &  &  &
}$$
with relations
$\{ \alpha ^3, \delta \alpha, \delta \beta, \xi \eta -\delta \gamma  \}$.
Clearly, there is no relation starting and ending
at the vertices $3$ and $5$, and it follows from Corollary~\ref{cor-vertex} that
$ \underline{\Gproj }A\cong \underline{\Gproj }B$ and
$D_{def}(A)\cong D_{def}(B)$, where $B$ is the algebra
$$\xymatrix{1 \ar@<+0.5ex>[r]^\gamma \ar@<-0.5ex>[r]_\beta & 2  \ar@(ul,ur)^\alpha
\ar[r]^\delta &4}$$
with relations $\{ \alpha ^3, \delta \alpha, \delta \beta \}$.
Now consider the algebra $B$. Let $e=e_2+e_4$ and let $S_i$ denote the simple $B$-module
associated to the vertex $i$. Then $\pd _{(eBe)^{op}}Be <\infty$
and $\pd _B(\frac {B/BeB}{\rad (B/BeB)})=\pd _B(S_1)<\infty$.
Hence, according to Corollary~\ref{cor-idem}, we have $\underline{\Gproj }B\cong \underline{\Gproj }(C)
$ and
$D_{def}(B)\cong D_{def}(C),$ where $C=eBe$ is the algebra
$$\xymatrix{2  \ar@(ul,ur)^\alpha
\ar[r]^\delta &4}$$
with relations $\{ \alpha ^3, \delta \alpha \}$. Since $C$ is a monomial algebra,
its Gorenstein-projective modules were described in \cite{CSZ18}. Indeed, $C$ is CM-free
and then $\underline{\Gproj }A\cong \underline{\Gproj }B \cong \underline{\Gproj }C\cong0$.
Therefore, all Gorenstein projective modules over $A$ are projective.
Now
consider the algebra $C$. Since $\pd S_4<\infty$ and $\pd _{e_2Ae_2}e_2A<\infty$,
it follows from \cite[Theorem 2.1]{Chen09} that $D_{sg}(C)\cong D_{sg}(k[x]/\langle x^3\rangle)$.
Then we get $D_{def}(A)\cong D_{def} (C)\cong  D_{sg}(C)\cong
\underline{\mod} (k[x]/\langle x^3\rangle)$.
Note that monomial algebras satisfy the Gorenstein
symmetry conjecture, Auslander-Reiten conjecture and
Gorenstein projective conjecture. Hence, we conclude that $A$
satisfies these conjectures by Corollary~\ref{cor-idem} and Corollary~\ref{cor-vertex}.
}
\end{example}

\begin{example}
{\rm This is Example 6.3 from \cite{GPS18}.
Let $A$ be the $k$-algebra given by the following quiver
$$\xymatrix{6 \ar[rr]^g& & 1 \ar[ld]_a  \ar[rd]^b &  & \\
& 2 \ar[rd]_c & & 3 \ar[ld]^d& \\
 5 \ar@<+0.5ex>[uu]^{f_1} \ar@<-0.5ex>[uu]_{f_2}& & 4 \ar[ll]^e&  &
}$$
with relations
$\{ ca-db, f_1ec, ed, gf_1e, bgf_1, ag \}$. Then
$A$ is a representation infinite non-monomial
algebra and $\gl A=\infty$. However,
$A$ can be reduced by Corollary~\ref{cor-arrow}
since $f_2$ is not occurring in any relations.
Note that the algebra $A/\langle \overline{f_2} \rangle$
is of finite representation type, and then it satisfies the Gorenstein
symmetry conjecture, Auslander-Reiten conjecture and
Gorenstein projective conjecture. Hence, we conclude that $A$
satisfies these conjectures by Corollary~\ref{cor-arrow}.

}
\end{example}

\noindent {\footnotesize {\bf ACKNOWLEDGMENT.}
This work is supported by the National Natural Science
Foundation of China (Grant No.12061060) and the
Scientific and Technological Innovation Team of Yunnan Province, China (Grant
No. 2020CXTD25).}

\end{document}